\documentclass[12pt,oneside]{article}
\usepackage{amsmath,amssymb,amsfonts,amsthm}
\usepackage{color}
\usepackage[all]{xy}
\usepackage{graphicx}
\usepackage{color}
\usepackage{makeidx}
\usepackage{multirow}
\usepackage{rotating}
\usepackage{pdfpages}
\usepackage{pdflscape}
\usepackage{tabularray}
\usepackage{multicol}
\usepackage{enumitem}
 \usepackage{hyperref}

\allowdisplaybreaks

\numberwithin{equation}{section} 
\numberwithin{figure}{section} 

\theoremstyle{plain}
\newtheorem*{theorem*}{Theorem}
\newtheorem{theorem}{Theorem}[section]
\newtheorem{lemma}[theorem]{Lemma}
\newtheorem{proposition}[theorem]{Proposition}

\newtheorem{remark}[theorem]{Remark}

\theoremstyle{definition}
\newtheorem{definition}[theorem]{Definition}

\theoremstyle{remark}

\newtheorem*{acknowledgement*}{Acknowledgement}

\numberwithin{equation}{section}

\newcommand\overcirc[1]{\raisebox{10pt}{\tiny{$\circ$}}{\kern-7.5pt}\mbox{$#1$}}
\newcommand\undersym[2]{\raisebox{-6pt}{$#2$}{\kern-5pt}\mbox{$#1$}}
\newcommand\overdiamond[1]{\raisebox{10pt}{\small$\star$}{\kern-7.5pt}\mbox{$#1$}}
\newcommand\overast[1]{\raisebox{10pt}{\small$\ast$}{\kern-7.5pt}\mbox{$#1$}}
\newcommand\overlind[1]{\raisebox{10pt}{\small$\overline{{\hspace{2pt}}\star}$}{\kern-7.5pt}\mbox{$#1$}}
\newcommand\overlinc[1]{\raisebox{10pt}{\small$\overline{{\hspace{2pt}}\circ}$}{\kern-7.5pt}\mbox{$#1$}}
\newcommand\overlina[1]{\raisebox{10pt}{\small$\overline{{\hspace{1pt}}\ast}$}{\kern-7.5pt}\mbox{$#1$}}

\begin{document}

\title{\bf Spherically symmetric Finsler metrics satisfying the $\sigma$T-condition
}

 \author{ Salah G. Elgendi }
\date{}

\maketitle

\begin{center}
  Department of Mathematics, Faculty of Science,
  \\
  Islamic University of Madinah, Madinah, Saudi Arabia
\end{center}

\begin{center}
     salah.ali@fsc.bu.edu.eg, \ salahelgendi@yahoo.com
\end{center}

\vspace{0.3cm}

\begin{abstract}
In this paper, we present a complete characterization of spherically symmetric Finsler metrics that satisfy the $\sigma T$-condition. 
We further investigate the subclass of such metrics within the Landsberg category and identify the precise conditions under which spherically symmetric Finsler metrics satisfying the $T$-condition must also be Berwaldian. 
In addition, we construct new non-regular solutions to the classical unicorn problem, providing explicit examples of Landsberg metrics that are not Berwaldian.
\end{abstract}

 \noindent{\bf Keywords:\/}\,   Spherically symmetric metrics; T-tensor;   T-condition; $\sigma T$-condition; Berwald spaces; Landsberg spaces.

\medskip\noindent{\bf MSC 2020:\/}  53B40; 53C60.


\section{Introduction}

An important class of Finsler metrics is that of \emph{spherically symmetric metrics}, invariant under the action of the rotation group $O(n)$.  
These metrics are defined on an open ball $\mathbb{B}^n(r_0) \subset \mathbb{R}^n$ and take the form
\[
F(x, y) = u\,\phi(r, s), \qquad r = |x|,\quad s = \frac{\langle x, y \rangle}{u}, \quad u=|y|,
\]
where $\phi(r, s)$ is a smooth function of the radial coordinate $r$ and the directional variable $s$.  
Spherically symmetric Finsler metrics have been extensively studied due to their elegant structure and tractable curvature properties \cite{Mo2006,Shen2003}.  
They generalize rotational symmetry in Riemannian geometry and provide a natural framework for studying special classes of Finsler metrics such as Berwald,  and Landsberg metrics.  

Beyond their mathematical significance, spherically symmetric metrics appear in several physical contexts, especially in Finslerian extensions of gravitational and cosmological models \cite{Rutz1993,Pfeifer2012}.  
Their symmetry makes explicit computations feasible and provides natural models for investigating curvature structures and variational principles in Finsler geometry \cite{Zhou_Mo}.  

\medskip

Among the fundamental tensorial objects in Finsler geometry is the \emph{T-tensor}, introduced by Matsumoto \cite{ttensor}.  
The T-tensor plays a significant role in the study of Finsler spaces. In particular, metrics satisfying the \emph{T-condition} ($T=0$), as well as those considered within the broader framework of the \emph{$\sigma T$-condition}, exhibit distinctive geometric properties.

From both mathematical and physical perspectives, the T-tensor remains a fundamental non-Riemannian quantity.  
It naturally arises in the decomposition of the curvature tensor, contributes to the classification of Landsberg and Berwald spaces \cite{Elgendi-LBp,Elgendi2021,Elgendi-ST_condition}, and even plays a role in Finslerian generalizations of general relativity, where it governs anisotropic corrections in geodesic deviation and field equations \cite{Asanov1985,Vacaru2012}.  
Despite its importance, explicit computations of the T-tensor are rare and typically restricted to highly symmetric situations.  

\medskip

In \cite{Elgendi-T-tensor-2025}, a full characterization of spherically symmetric Finsler metrics satisfying the T-condition was obtained for dimensions $n \geq 3$, showing that such metrics must take the form
\[
\phi(r,s) =
a(r)\, s^{\frac{c(r)\, r^2 - 1}{c(r)\, r^2}} \left( r^2 -  s^2 \right)^{\frac{1}{2 c(r) r^2}},
\]
where $a(r)$ and $c(r)$ are smooth functions of the radial variable $r$.  

\bigskip

The present paper extends this study to the \emph{$\sigma T$-condition}.  
Our main goals are as follows:  
\begin{itemize}
    \item[(i)] to characterize all spherically symmetric Finsler metrics satisfying the $\sigma T$-condition;  
    \item[(ii)] to determine when a spherically symmetric metric satisfying the T-condition must be Berwaldian;  
    \item[(iii)] to investigate when metrics satisfying the $\sigma T$-condition belong to the Landsberg class but are not Berwaldian, thereby producing new solutions to the unicorn Landsberg problem.  
\end{itemize}

We prove that a spherically symmetric Finsler metric $F=u\phi(r,s)$ satisfies the $\sigma T$-condition if and only if
\begin{align*}
\phi(r,s)
&=\exp\left(\int \frac{c_1 s+c_2 \sqrt{r^2-s^2}}{r^2+c_1 s^2+c_2 s \sqrt{r^2-s^2}}\,ds\right) \\[4pt]
&=A(r)\, \sqrt{r^2+c_1 s^2+c_2 s \sqrt{r^2-s^2}}\,
   \exp\!\left(
      \frac{c_2}{\sqrt{c_2^2-4c_1-4}}
      \operatorname{arctanh}\!\left(
         \frac{c_2s+2\sqrt{r^2-s^2}}{s\sqrt{c_2^2-4c_1-4}}
      \right)
   \right),
\end{align*}
where $A(r)$, $c_1(r)$, and $c_2(r)$ are smooth functions of $r$.  

\medskip

By \cite{Elgendi-LBp}, a Landsberg metric $F$ remains Landsberg under the conformal change $F \mapsto e^{\sigma}F$ if and only if the $\sigma T$-condition is satisfied.  
Motivated by this, we study when a spherically symmetric metric satisfying the $\sigma T$-condition can yield new examples of non-regular Landsberg metrics that are not Berwaldian.  
In particular, we obtain two families of such metrics (together with their conformal deformations), given explicitly by
\[
\phi(r,s) = A(r)\,
\sqrt{\,r^2 + c_1 s^2 + c_2 s \sqrt{r^2 - s^2}\,}\,
\exp\!\left(
\frac{c_2}{\sqrt{c_2^2 - 4c_1 - 4}}
\operatorname{arctanh}\!\left(
\frac{c_2 s + 2\sqrt{r^2 - s^2}}{s\sqrt{c_2^2 - 4c_1 - 4}}
\right)
\right),
\]
and
\[
\phi(r,s) = A(r)\,
\sqrt{r^2+(bc_2^2-1) s^2+c_2 s \sqrt{r^2-s^2}}\,
\exp\!\left(
\frac{1}{\sqrt{1-4b}}
\operatorname{arctanh}\!\left(
\frac{c_2s+2\sqrt{r^2-s^2}}{c_2s\sqrt{1-4b}}
\right)
\right),
\]
under the restrictions stated in Theorems \ref{Thm:Landsberg_1} and \ref{Thm:Landsberg_2}.  
These provide new explicit solutions to the unicorn Landsberg problem.

\section{Preliminaries} 
~\par 
 
Spherically symmetric Finsler metrics form an important class of Finsler structures characterized by rotational invariance around the origin. Defined on an open ball $\mathbb{B}^n(r_0) \subset \mathbb{R}^n$, such a metric takes the form
\[
F(x, y) = u\, \phi(r, s), \quad \text{where} \quad r = |x|,\; u = |y|,\; s = \frac{\langle x, y \rangle}{|y|},
\]
where $\phi$ is a smooth, positive function defined on $[0, r_0) \times \mathbb{R}$, and $\langle \cdot, \cdot \rangle$ and $|\cdot|$ denote the standard Euclidean inner product and norm, respectively.
\begin{equation*}
\label{Regular_condition}
\phi - s \phi_s > 0, \quad \phi - s \phi_s + (r^2 - s^2)\phi_{ss} > 0 \quad \text{for } n \geq 3,
\end{equation*}
or
\[
\phi - s \phi_s + (r^2 - s^2)\phi_{ss} > 0 \quad \text{for } n = 2,
\]
for all \(|s| \leq r < r_0\). Here, \(\phi_s\) and \(\phi_{ss}\) denote the first and second partial derivatives of \(\phi\) with respect to \(s\), respectively.

These metrics were studied extensively in \cite{MoZhou2009, MoZhouZhu2010}, where their geometric and curvature properties were thoroughly analyzed. The spherical symmetry ensures invariance under the orthogonal group $O(n)$, which allows significant simplification in the study of curvature conditions.

Spherically symmetric metrics generalize radial Riemannian structures and provide a rich framework for constructing explicit examples of Finsler metrics with special geometric properties, such as Landsberg or Berwald metrics \cite{Elgendi2021-SSM,Elgendi2023-SSM,Zhou_Mo,Tayebi-et.}. Owing to their symmetry, many geometric quantities associated with these metrics depend only on the variables $r$ and $s$.

\bigskip

Using the Euclidean metric tensor components $\delta_{ij}$, we may lower the indices of $x^i$ and $y^i$ via
\[
x_i := \delta_{ih} x^h, \quad y_i := \delta_{ih} y^h.
\]
That is, we have $x_i = x^i$ and $y_i = y^i$. It is important to emphasize that in this context,
\[
y_i \neq F \frac{\partial F}{\partial y^i}, \quad \text{but rather} \quad y_i = u \frac{\partial u}{\partial y^i}.
\]

\bigskip

The components of the metric tensor $g_{ij}$ associated with the spherically symmetric Finsler metric $F = u \phi(r,s)$ are given by
\begin{equation}\label{Eq:g_ij}
g_{ij} = \sigma_0\, \delta_{ij} + \sigma_1\, x_i x_j + \frac{\sigma_2}{u} (x_i y_j + x_j y_i) + \frac{\sigma_3}{u^2} y_i y_j,
\end{equation}
where
\[
\sigma_0 = \phi(\phi - s \phi_s), \quad \sigma_1 = \phi_s^2 + \phi \phi_{ss}, \quad \sigma_2 = (\phi - s \phi_s)\phi_s - s \phi \phi_{ss}, \quad \sigma_3 = s^2 \phi \phi_{ss} - s(\phi - s \phi_s)\phi_s.
\]

Throughout this work, subscripts such as \(\phi_s\) denote partial derivatives with respect to the variable \(s\).

The inverse metric tensor components $g^{ij}$ are given by (cf. \cite{Zhou_Mo}):
\begin{equation}\label{Eq:g^ij}
g^{ij} = \rho_0\, \delta^{ij} + \frac{\rho_1}{u^2} y^i y^j + \frac{\rho_2}{u} (x^i y^j + x^j y^i) + \rho_3\, x^i x^j,
\end{equation}
where
\begin{align*}
\rho_0 &= \frac{1}{\phi(\phi - s \phi_s)}, \\
\rho_1 &= \frac{(s\phi + (r^2 - s^2)\phi_s)(\phi \phi_s - s \phi_s^2 - s \phi \phi_{ss})}{\phi^3(\phi - s \phi_s)(\phi - s \phi_s + (r^2 - s^2)\phi_{ss})}, \\
\rho_2 &= -\frac{\phi \phi_s - s \phi_s^2 - s \phi \phi_{ss}}{\phi^2(\phi - s \phi_s)(\phi - s \phi_s + (r^2 - s^2)\phi_{ss})}, \\
\rho_3 &= -\frac{\phi_{ss}}{\phi(\phi - s \phi_s)(\phi - s \phi_s + (r^2 - s^2)\phi_{ss})}.
\end{align*}

\medskip

The \emph{geodesic spray coefficients} $G^i$ for the spherically symmetric Finsler metric $F$ are given by:
\begin{equation} \label{G}
G^i = u P y^i + u^2 Q x^i,
\end{equation}
where $P = P(r, s)$ and $Q = Q(r, s)$ are defined by
\begin{equation} \label{P,Q}
Q := \frac{1}{2r} \cdot \frac{-\phi_r + s \phi_{rs} + r \phi_{ss}}{\phi - s \phi_s + (r^2 - s^2) \phi_{ss}}, \quad
P := -\frac{Q}{\phi} \left( s \phi + (r^2 - s^2) \phi_s \right) + \frac{1}{2r\phi} \left( s \phi_r + r \phi_s \right).
\end{equation}

\medskip

\begin{proposition}[\cite{Elgendi2021-SSM}]
Let $S$ be a spray of the form \eqref{G} with arbitrary functions $P(r,s)$ and $Q(r,s)$. Then $S$ is metrizable by a Finsler metric $F = u\phi(r,s)$ if and only if $\phi$ satisfies:
\begin{equation} \label{Comp_C_C_2}
\begin{split}
C_1 &:= \left(1 + sP - (r^2 - s^2)(2Q - sQ_s)\right)\phi_s + \left(sP_s - 2P - s(2Q - sQ_s)\right)\phi = 0, \\
C_2 &:= \frac{1}{r} \phi_r - (P + Q_s(r^2 - s^2))\phi_s - (P_s + sQ_s)\phi = 0.
\end{split}
\end{equation}
\end{proposition}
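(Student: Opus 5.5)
We will prove the proposition using the standard characterization of metrizability: a spray $S$ with coefficients $G^i$ is the geodesic spray of a Finsler function $F$ if and only if $F$ is horizontally constant with respect to the nonlinear connection of $S$. This means
\[
\delta_i F := \frac{\partial F}{\partial x^i} - N^j_i \frac{\partial F}{\partial y^j} = 0, \qquad N^j_i = \frac{\partial G^j}{\partial y^i}.
\]
We assume this condition in the form $d_h F = 0$ (Bucataru--Muzsnay). For the necessity direction, recall that geodesic sprays are torsion-free and preserve $F$. Sufficiency then follows because $d_hF=0$ together with homogeneity forces the Euler--Lagrange equations. So the whole task is to compute $\delta_i F$ for $F = u\phi(r,s)$ and $G^i = uPy^i + u^2Qx^i$, and to show that its vanishing is exactly $C_1 = C_2 = 0$.

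\textbf{Derivatives of the basic variables.} We will use
\[
u_{y^i} = \frac{y_i}{u},\quad s_{y^i} = \frac{1}{u}\Big(x_i - \frac{s}{u}y_i\Big),\quad r_{x^i} = \frac{x_i}{r},\quad s_{x^i} = \frac{y_i}{u}.
\]
These give
\[
F_{x^i} = \frac{u\phi_r}{r}x_i + \phi_s y_i, \qquad F_{y^j} = \phi_s x_j + \frac{\phi - s\phi_s}{u} y_j .
\]
Differentiating $G^j$ gives
\[
N^j_i = uP\,\delta^j_i + \Big(P\frac{y_i}{u} + P_s s_{y^i}u\Big) y^j + \big(2uQ\,y_i + u^2 Q_s s_{y^i}\big) x^j .
\]
Every term of $N^j_i$ is a combination of $\delta^j_i$, $x_i$ and $y_i$ multiplied by $x^j$ or $y^j$.

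\textbf{Contraction and decomposition.} Contracting $N^j_i$ with $F_{y^j}$ uses only the scalar products $x_jy^j = us$, $y_jy^j = u^2$ and $x_jx^j = r^2$. The result is that
\[
\delta_i F = \mathcal{A}\, x_i + \mathcal{B}\, \frac{y_i}{u},
\]
with $\mathcal{A}$ and $\mathcal{B}$ functions of $(r,s)$ multiplied by powers of $u$. For $n\ge 2$ and $y$ not parallel to $x$, the covectors $x_i$ and $y_i$ are linearly independent. Hence $\delta_iF=0$ is equivalent to $\mathcal{A} = \mathcal{B} = 0$.

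\textbf{Matching with $C_1$ and $C_2$.} The coefficient $\mathcal{A}$ collects $\tfrac{u}{r}\phi_r$, $-uP\phi_s$, $-uP_s(\ldots)$ and the $Q_s$ terms. We expect it to equal $u\,C_2$ up to a term proportional to $\mathcal{B}$. The coefficient $\mathcal{B}$ collects $\phi_s$, $-P(\phi-s\phi_s)$, and the $Q$ and $Q_s$ contributions weighted by $r^2-s^2$. We expect it to be a combination whose $x_i$-free part is $C_1$.

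This last step is the main obstacle. The raw coefficients will not coincide literally with $C_1$ and $C_2$. We plan to eliminate $\phi_r$ from one of them by taking the linear combination suggested by the contraction $y^i\delta_iF$, which is $S(F)$ and must vanish. Once that is done, we will check that the pair $(\mathcal{A},\mathcal{B})$ is related to $(C_1,C_2)$ by an invertible triangular transformation whose diagonal entries are nonzero multiples of $u$ and $r$. With that, $\delta_iF=0 \iff C_1=C_2=0$, which proves the proposition.
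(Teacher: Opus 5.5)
Your strategy is the right one, and it is the route behind the cited result (the paper itself only quotes it from \cite{Elgendi2021-SSM}): characterize metrizability by $\delta_iF=0$ and split $\delta_iF$ along $x_i$ and $y_i$. The gap is that the one computation that constitutes the proof is never carried out. What you predict about it is wrong, and that leads you to an unnecessary and misdirected ``elimination'' step. It starts with a slip in $N^j_i$. Since $\partial_{y^i}(u^2)=2y_i$, the coefficient of $x^j$ is $2Q\,y_i+u^2Q_s\,s_{y^i}$, not $2uQ\,y_i+u^2Q_s\,s_{y^i}$. Your version is not $1$-homogeneous in $y$, and with it the $y_i$-coefficient cannot come out as $C_1$.

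With the correct $N^j_i$, use $y^jF_{y^j}=u\phi$, $x^jF_{y^j}=s\phi+(r^2-s^2)\phi_s$ and $u^2s_{y^i}=u\,x_i-s\,y_i$. This gives
\[
N^j_iF_{y^j}=u\bigl(P\phi_s+P_s\phi+Q_s\Lambda\bigr)\,x_i+\bigl(2P\phi-sP\phi_s-sP_s\phi+(2Q-sQ_s)\Lambda\bigr)\,y_i,\qquad \Lambda:=s\phi+(r^2-s^2)\phi_s .
\]
Subtracting this from $F_{x^i}=\frac{u\phi_r}{r}x_i+\phi_s y_i$ gives exactly
\[
\delta_iF=u\,C_2\,x_i+C_1\,y_i ,
\]
i.e.\ $\mathcal{A}=uC_2$ and $\mathcal{B}=uC_1$ in your notation. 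So the raw coefficients \emph{do} coincide with $C_2$ and $C_1$, and no triangular change of variables is needed. Your proposed elimination would in fact go the wrong way. The term $\phi_r$ appears only in the $x_i$-coefficient, and the combination $y^i\delta_iF=S(F)=u^2(C_1+sC_2)$ reintroduces it rather than removing it. Besides, invoking $S(F)=0$ is only legitimate in the necessity direction. With the displayed identity, the proposition follows from the linear independence of $x_i$ and $y_i$ where $|s|<r$, extended by continuity. The converse is immediate.
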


\medskip

\begin{definition}
A Finsler manifold $(M, F)$ is called:
\begin{itemize}
  \item \emph{Berwald} if its Berwald tensor $G^i_{jkh} :=\frac{\partial^3G^i}{\partial y^j \partial y^k \partial y^h} 0$ vanishes, that is, $G^i_{jkh} = 0$,
  \item \emph{Landsberg} if its Landsberg tensor $L_{jkh} := -\frac{1}{2} F G^i_{jkh} \frac{\partial F}{\partial y^i} $ vanishes, that is, $L_{jkh} = 0$.
\end{itemize}
\end{definition}

\begin{theorem}[\cite{Elgendi2021-SSM}] \label{Theorem_A}
Let $F = u \phi(r,s)$ be a spherically symmetric Landsberg Finsler metric in dimension $n \geq 3$. Then $F$ is either Riemannian, or its geodesic spray is given by:
\begin{equation} \label{Zhou_P&Q}
P = c_1 s + \frac{c_2}{r^2} \sqrt{r^2 - s^2}, \quad Q = \frac{1}{2} c_0 s^2 - \frac{c_2 s}{r^4} \sqrt{r^2 - s^2} + c_3,
\end{equation}
where $c_0, c_1, c_2, c_3$ are arbitrary smooth functions of $r$.
\end{theorem}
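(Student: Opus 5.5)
The plan is to compute the Landsberg tensor of $F=u\phi(r,s)$ directly from the spray \eqref{G}, split it along a basis of $O(n)$-invariant symmetric $3$-tensors, and reduce $L_{jkh}=0$ to ODEs in $s$ that can be integrated.

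\textbf{Computing and decomposing the Landsberg tensor.} I will differentiate $G^i=uPy^i+u^2Qx^i$ three times in $y$, using $\partial u/\partial y^j=y_j/u$ and $\partial s/\partial y^j=(x_j-s y_j/u)/u$. Then I contract with $\frac{\partial F}{\partial y^i}=\phi_s x_i+(\phi-s\phi_s)y_i/u$. Since $L_{jkh}$ is totally symmetric and satisfies $L_{jkh}y^h=0$ by homogeneity, it is natural to work with the projected quantities
\[
\bar x_i:=x_i-\tfrac{s}{u}y_i,\qquad \bar\delta_{ij}:=\delta_{ij}-\tfrac{1}{u^2}y_iy_j .
\]
Every term involving $y_j$ must cancel. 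The Landsberg tensor therefore takes the form
\[
L_{jkh}=\frac{1}{u}\Big(\mathcal L_1\,\bar x_j\bar x_k\bar x_h+\mathcal L_2\,(\bar x_j\bar\delta_{kh}+\bar x_k\bar\delta_{jh}+\bar x_h\bar\delta_{jk})\Big),
\]
where $\mathcal L_1,\mathcal L_2$ are explicit expressions in $\phi,\phi_s$ and in $P,Q$ together with their $s$-derivatives up to order three. For $n\ge3$ there is a vector orthogonal to both $x$ and $y$. Hence $\bar x\otimes\bar x\otimes\bar x$ and the symmetrized $\bar x\otimes\bar\delta$ are linearly independent, and $L=0$ is equivalent to $\mathcal L_1=\mathcal L_2=0$. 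This is exactly where $n\ge 3$ is used.

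\textbf{Eliminating $\phi$.} The $\bar\delta$-coefficient $\mathcal L_2$ only collects terms from $G^i_{jkh}$ in which one $y$-derivative hits $u$ or $u^2$. For this reason I expect it to factor as a linear combination of $\phi$ and $\phi_s$, with coefficients built from $P,P_s,P_{ss}$ and $Q_s,Q_{ss}$. I will then use the metrizability condition $C_1=0$ of the preceding Proposition to replace $\phi_s/\phi$ by
\[
\frac{\phi_s}{\phi}=-\frac{sP_s-2P-s(2Q-sQ_s)}{1+sP-(r^2-s^2)(2Q-sQ_s)}.
\]
This turns $\mathcal L_2=0$ into an equation for $P$ and $Q$ alone. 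The Riemannian alternative appears here: if the relevant combination forces $\phi$ to be of the form $\sqrt{a(r)+b(r)s^2}$, then $F$ is Riemannian. Otherwise a factor must vanish. I expect this to give the linear ODE
\[
(r^2-s^2)P_{ss}-sP_s+P=0 .
\]
Its general solution is $P=c_1s+\frac{c_2}{r^2}\sqrt{r^2-s^2}$. The function $\sqrt{r^2-s^2}$ is indeed the second solution, as one checks directly, and the factor $r^{-2}$ is just a normalization of $c_2(r)$.

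\textbf{Determining $Q$.} Next I substitute this $P$ into $\mathcal L_1=0$, again after eliminating $\phi_s/\phi$ via $C_1$. The resulting equation should be linear in $Q$, and once the $P$-contribution is moved to the right-hand side, I expect it to say that $Q_{sss}$ is determined by $c_2$. Integrating and fixing constants gives $Q=\tfrac12c_0s^2-\frac{c_2 s}{r^4}\sqrt{r^2-s^2}+c_3$. The coupling coefficient $-c_2/r^4$ is forced by matching the $\sqrt{r^2-s^2}$ terms in that equation. Finally I will check that $C_2=0$ imposes no further restriction on the form of $P,Q$ and only constrains $\phi$.

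\textbf{Main obstacle.} The hard step is the second one: organizing the lengthy $\mathcal L_2$ expression so that, after substituting $C_1$, it visibly factors into a "Riemannian" factor times the ODE for $P$. I would first try to write everything in terms of $\Phi:=\phi_s/\phi$ and to clear the denominator $1+sP-(r^2-s^2)(2Q-sQ_s)$ before factoring.
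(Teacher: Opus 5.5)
The paper does not prove this statement; it quotes it from \cite{Elgendi2021-SSM} and only records the equivalent reduced system $L_1=L_2=0$ of \eqref{Zhou_L_1_2}, which is what your first step produces. The gap is in your second step.

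Write $m^2=r^2-s^2$, $D=1+sP-m^2(2Q-sQ_s)$ and $N=2P-sP_s+s(2Q-sQ_s)$, so that $C_1$ reads $\phi_s/\phi=N/D$. Substituting this into $L_2=0$ and multiplying by $D$ gives
\[
-sP_{ss}\,D+N\,(P-sP_s)+\bigl(s+s^2P+m^2(2P-sP_s)\bigr)(Q_s-sQ_{ss})=0 .
\]
This equation is linear in $Q,Q_s,Q_{ss}$, and the coefficient of $Q_{ss}$ is $-s\bigl(s+s^2P+m^2(2P-sP_s)\bigr)$. So for a generic $P$ you can simply solve it for $Q$; by itself it places no restriction on $P$. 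In particular it cannot factor as (something)$\times\bigl((r^2-s^2)P_{ss}-sP_s+P\bigr)$. Take $P=0$: that factor vanishes, yet the left-hand side reduces to $s(Q_s-sQ_{ss})$, which is not identically zero.

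The ODE for $P$ only follows from both Landsberg equations together. Moreover, the Riemannian case splits off at the level of $\phi$, which you have already eliminated at this point. Your third step, by contrast, is fine once the correct $P$ is available: the $Q$-terms that $C_1$ brings into $L_1$ carry the factor $3sP_{ss}-(r^2-s^2)P_{sss}$, and this vanishes for such $P$.

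The missing idea is to keep $\phi$ and treat the two equations as a linear system. Put $A:=P-sP_s$ and $B:=Q_s-sQ_{ss}$, so that $P_{ss}=-A_s/s$ and $Q_{sss}=-B_s/s$. Set $X:=A+m^2B$ and $Y:=A_s+sB$. Then \eqref{Zhou_L_1_2} gives
\[
L_2=\phi\,Y+\phi_s\,X,\qquad \partial_sL_2+s\,L_1=\frac{\phi-s\phi_s}{s}\,Y+\phi_{ss}\,X .
\]
This is a homogeneous system in $(X,Y)$ with determinant $-\sigma_2/s$. Since $\sigma_2=(W-sW_s)(\phi-s\phi_s)^2$, the determinant vanishes identically exactly when $W=k(r)s$, i.e.\ $\phi=a(r)\sqrt{1+k(r)s^2}$, which is the Riemannian case.

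Otherwise $X=Y=0$, and the combination $sX-m^2Y=s\bigl((r^2-s^2)P_{ss}-sP_s+P\bigr)$ recovers your ODE. Explicitly, $A_s=sA/m^2$ gives $A=c_2/\sqrt{r^2-s^2}$ and hence $B=-c_2/(r^2-s^2)^{3/2}$. Integrating $P-sP_s=A$ and $Q_s-sQ_{ss}=B$ then yields exactly the stated $P$ and $Q$. Neither $C_1$ nor $C_2$ is needed.
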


According to \cite{Elgendi2021-SSM}, the Landsberg condition in dimension $n \geq 3$ is equivalent to the following:
\begin{equation} \label{Zhou_L_1_2}
\begin{split}
L_1 &= 3\phi_s P_{ss} + \phi P_{sss} + \left(s\phi + (r^2 - s^2)\phi_s\right) Q_{sss} = 0, \\
L_2 &= -s\phi P_{ss} + \phi_s (P - sP_s) + \left(s\phi + (r^2 - s^2)\phi_s\right)(Q_s - sQ_{ss}) = 0.
\end{split}
\end{equation}

Moreover, the Berwald condition in dimension $n \geq 3$ is equivalent to:
\begin{equation}\label{Berwald_conditions}
P - sP_s = 0, \quad Q_s - sQ_{ss} = 0.
\end{equation}

\medskip

To proceed with the investigation of the T-tensor, we introduce the covector
\[
m_i = x_i - \frac{s}{u} y_i,
\]
which plays a central role in various geometric identities.

Define the angular metric  $\hbar_{ij} $ associated with the Euclidean metric by
\[
\hbar_{ij} := \delta_{ij} - \frac{1}{u^2} y_i y_j.
\]

We also adopt the notations \( m^2 := r^2 - s^2 \), \( \mu := \sigma_1 \), and use:
\[
\mu_s = (\sigma_1)_s, \quad \mu_{ss} = (\sigma_1)_{ss}.
\]

The following identities hold:
\begin{align}
(\sigma_0)_s &= \sigma_2, & (\sigma_2)_s &= -s \mu_s, \notag\\
(\sigma_3)_s &= s^2 \mu_s - \sigma_2, & \sigma_3 &= -s \sigma_2, \label{Identites_sigmas}
\end{align}
\begin{align}
m^2 &= m^i m_i = x^i m_i, & x^r \hbar_{ri} &= m_i, & y^i m_i &= 0, & y_i m^i &= 0, \label{Identites_m^2}
\end{align}
\begin{equation}
\frac{\partial}{\partial y^k} \left( \frac{y_i}{u} \right) = \frac{1}{u} \left( \delta_{ik} - \frac{1}{u^2} y_i y_k \right) = \frac{1}{u} \hbar_{ik}. \label{Identites_hbar}
\end{equation}

\begin{proposition}\cite{Elgendi-T-tensor-2025}\label{Lem:T-tensor_SSFM}
The T-tensor of a spherically symmetric Finsler metric $F = u \phi(r,s)$ is given by:
\begin{align*}
T_{hijk} &= \Phi (\hbar_{hi}\hbar_{jk}+\hbar_{hj}\hbar_{ik}+\hbar_{hk}\hbar_{ij}) \\
&\quad + \Psi (\hbar_{hk}m_im_j+\hbar_{hj}m_im_k+\hbar_{hi}m_jm_k+\hbar_{ij}m_hm_k+\hbar_{jk}m_im_h+\hbar_{ik}m_jm_h) \\
&\quad + \Omega\, m_h m_i m_j m_k,
\end{align*}
where
\begin{align}
\Phi &= -\frac{\phi \sigma_2}{4u} \left(2 s + \sigma_2 m^2 \kappa \right), \label{Eq:PhiFactored}\\
\Psi &= \frac{\phi}{4u} \left( \frac{4\phi_s \sigma_2}{\phi} - 2 s \mu_s - 2 \rho_0 \sigma_2^2 - \sigma_2 \kappa (2 \sigma_2 + \mu_s m^2) \right), \label{Eq:PsiFactored}\\
\Omega &= \frac{\phi}{4u} \left( \frac{8\mu_s \phi_s}{\phi} + 2 \mu_{ss} - 6 \rho_0 \sigma_2 \mu_s - 3 (2 \sigma_2 + \mu_s m^2)(\kappa \mu_s + 2 \rho_3 \sigma_2) \right), \label{Eq:OmegaFactored}
\end{align}
and \(\kappa := \rho_0 + \rho_3 m^2\).
\end{proposition}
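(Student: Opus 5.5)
The plan is to compute the T-tensor directly from Matsumoto's definition. Write $\ell_i:=\partial F/\partial y^i$ and let $C_{ijk}=\frac12\,\partial g_{ij}/\partial y^k$ be the Cartan tensor. Then
\[
T_{hijk}=F\,C_{hij;k}+C_{hij}\ell_k+C_{hik}\ell_j+C_{hjk}\ell_i+C_{ijk}\ell_h ,
\]
where the vertical covariant derivative is
\[
C_{hij;k}=\frac{\partial C_{hij}}{\partial y^k}-C_{rij}C^r_{hk}-C_{hrj}C^r_{ik}-C_{hir}C^r_{jk},
\qquad C^r_{jk}=g^{rs}C_{sjk}.
\]

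\emph{Step 1: the Cartan tensor.} I differentiate \eqref{Eq:g_ij} with respect to $y^k$, using $\partial s/\partial y^k=m_k/u$, $\partial u/\partial y^k=y_k/u$ and \eqref{Identites_hbar}. I then rewrite every $x_i$ as $m_i+\frac{s}{u}y_i$ and simplify with \eqref{Identites_sigmas}, in particular $(\sigma_0)_s=\sigma_2$, $(\sigma_2)_s=-s\mu_s$ and $\sigma_3=-s\sigma_2$. All terms containing $y_i$ should cancel, since $C_{ijk}y^k=0$. I expect to obtain
\[
C_{ijk}=\frac{1}{2u}\Big(\sigma_2(\hbar_{ij}m_k+\hbar_{jk}m_i+\hbar_{ik}m_j)+\mu_s\, m_im_jm_k\Big).
\]

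\emph{Step 2: raising an index.} Contracting \eqref{Eq:g^ij} with $m_s$ and $\hbar_{sj}$, and using \eqref{Identites_m^2} with $y^sm_s=0$, gives
\[
g^{rs}m_s=\rho_0 m^r+\rho_3 m^2 x^r+\rho_2 m^2\,\frac{y^r}{u}.
\]
Consequently $C_{rij}\,g^{rs}m_s$ involves $m^r m_r$ and $x^r\hbar_{ri}=m_i$, and only the combination $\kappa=\rho_0+\rho_3m^2$ survives. With this I compute the three quadratic terms $C_{rij}C^r_{hk}$ and symmetrize.

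\emph{Step 3: assembly.} I differentiate $C_{hij}$ once more, using $\partial_k m_i=-\frac{s}{u}\hbar_{ik}-\frac{m_k y_i}{u^2}$ and $\partial_k\hbar_{ij}=-\frac{1}{u^2}(\hbar_{ik}y_j+\hbar_{jk}y_i)$. I then substitute $F=u\phi$ and
\[
\ell_k=\phi_s m_k+\phi\,\frac{y_k}{u}.
\]
The $y$-components must cancel against the $\ell$-terms, because $T$ is totally symmetric and $y^hT_{hijk}=0$. The surviving tensor is a totally symmetric 4-tensor built only from $\hbar$ and $m$, so it is automatically of the three-block form stated in Proposition~\ref{Lem:T-tensor_SSFM}. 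The coefficients $\Phi,\Psi,\Omega$ are then read off by comparing the $\hbar\hbar$, $\hbar mm$ and $mmmm$ parts. For example, the $-\frac{s}{u}\hbar$ term from $\partial_k m_i$ produces the $2s$ in $\Phi$, and the quadratic $C\,C$ terms produce the $\sigma_2^2m^2\kappa$ contribution.

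The main obstacle is the bookkeeping of the quadratic contractions in Step 2. One must track how $\hbar_{rj}g^{rs}\hbar_{sk}$ and $m_rg^{rs}m_s$ reduce, which brings in $\rho_0$, $\rho_3$ and $\kappa$. One must also check that all $\rho_1,\rho_2$ terms drop out after contraction with the $y$-orthogonal tensors $\hbar$ and $m$. I would first verify the $\Omega$ coefficient by contracting $T_{hijk}$ with $x^hx^ix^jx^k$, since every block then reduces to a power of $m^2$ and this gives a scalar check on the whole computation.
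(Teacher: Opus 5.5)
Your proposal is correct. The paper gives no proof of this proposition: it quotes it from \cite{Elgendi-T-tensor-2025}, just as it dismisses Proposition~\ref{raised-tensor} as a ``straightforward calculation.'' Your direct computation from Matsumoto's definition is therefore a self-contained proof of a result the paper takes for granted, not an alternative to an argument in the text.

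Your intermediate formulas check out:
\begin{itemize}
\item The Cartan tensor is exactly $\frac{1}{2u}\bigl(\sigma_2(\hbar_{ij}m_k+\hbar_{jk}m_i+\hbar_{ik}m_j)+\mu_s m_im_jm_k\bigr)$. It helps to first write $g_{ij}=\sigma_0\hbar_{ij}+\sigma_1m_im_j+\frac{\phi\phi_s}{u}(m_iy_j+m_jy_i)+\frac{\phi^2}{u^2}y_iy_j$.
\item Contracting against $C$, one has $C_{rij}g^{rs}\hbar_{sh}=\rho_0C_{hij}+\rho_3m_h\,C_{rij}m^r$ and $C_{rij}g^{rs}m_s=\kappa\,C_{rij}m^r$.
\end{itemize}
Write $H_{hijk}$ for the three-term $\hbar\hbar$ block and $M_{hijk}$ for the six-term $\hbar mm$ block. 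Discard every term with a free $y$-index, which is legitimate because $T$ is $y$-orthogonal. The three pieces then become
\[
F\,\partial_kC_{hij}\ \longrightarrow\ \frac{\phi}{2u}\bigl(-s\sigma_2H_{hijk}-s\mu_sM_{hijk}+\mu_{ss}\,m_hm_im_jm_k\bigr),
\]
\[
C_{hij}\ell_k+C_{hik}\ell_j+C_{hjk}\ell_i+C_{ijk}\ell_h\ \longrightarrow\ \frac{\phi_s}{u}\bigl(\sigma_2M_{hijk}+2\mu_s\,m_hm_im_jm_k\bigr),
\]
\begin{multline*}
-F\bigl(C_{rij}C^r_{hk}+C_{hrj}C^r_{ik}+C_{hir}C^r_{jk}\bigr)\ \longrightarrow\ -\frac{\phi}{4u}\Bigl(\sigma_2^2\kappa m^2H_{hijk}+\bigl(\sigma_2\kappa(2\sigma_2+\mu_sm^2)+2\rho_0\sigma_2^2\bigr)M_{hijk}\\
+3\bigl(2\rho_0\sigma_2\mu_s+(2\sigma_2+\mu_sm^2)(\kappa\mu_s+2\rho_3\sigma_2)\bigr)m_hm_im_jm_k\Bigr).
\end{multline*}
Their sum gives exactly the stated $\Phi$, $\Psi$, $\Omega$. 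Merging the $\hbar mm$ terms in the first line uses $(\sigma_2)_s=-s\mu_s$.

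Two small bookkeeping remarks.

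First, a single term such as $C_{rij}C^r_{hk}$ is not totally symmetric. Up to the common factor $1/(4u^2)$, its $\hbar mm$ part carries:
\begin{itemize}
\item $\sigma_2\kappa(2\sigma_2+\mu_sm^2)$ on the two ``within-pair'' terms $\hbar_{ij}m_hm_k$ and $\hbar_{hk}m_im_j$;
\item $\rho_0\sigma_2^2$ on the four ``cross'' terms.
\end{itemize}
Only after summing the three pairings, where each $\hbar mm$ term is within-pair once and cross twice, do you get the symmetric coefficient. That is where the $2\rho_0\sigma_2^2$ in $\Psi$ comes from.

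Second, contracting with $x^hx^ix^jx^k$ does not isolate $\Omega$. It gives $m^4(3\Phi+6m^2\Psi+m^4\Omega)$, so it is a scalar check on all three coefficients together, not a check of $\Omega$ alone.
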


Define
\[
W(r,s) := \frac{\phi_s}{\phi - s \phi_s},
\]
and observe that
\begin{equation} \label{EQ:W_1}
\phi(r,s) = \exp\left( \int_0^s \frac{W}{1 + sW} \, ds \right).
\end{equation}

\begin{proposition}\cite{Elgendi-T-tensor-2025}
\label{Prop:Phi=0}
The following identity holds:
\[
2s + m^2 \sigma_2 \kappa = \frac{(\phi - s \phi_s)^2}{\phi(\phi - s \phi_s + m^2 \phi_{ss})} \left( W_s + \left( \frac{1}{s} + \frac{2s}{m^2} \right) W + \frac{2}{m^2} \right).
\]
\end{proposition}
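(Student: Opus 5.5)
The plan is a direct computation: put both sides over the common denominator $\phi(\phi-s\phi_s+m^2\phi_{ss})$ and compare numerators. Write $A:=\phi-s\phi_s$ and $D:=A+m^2\phi_{ss}$.

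First I simplify $\kappa=\rho_0+\rho_3m^2$ using the formulas for $\rho_0,\rho_3$ stated after \eqref{Eq:g^ij}:
\[
\kappa=\frac{1}{\phi A}-\frac{m^2\phi_{ss}}{\phi A D}=\frac{D-m^2\phi_{ss}}{\phi AD}=\frac{1}{\phi D}.
\]
Substituting $\sigma_2=A\phi_s-s\phi\phi_{ss}$ and expanding $D$ then gives
\[
2s+m^2\sigma_2\kappa=\frac{2s\phi D+m^2\sigma_2}{\phi D}
=\frac{2s\phi A+s\,m^2\phi\phi_{ss}+m^2A\phi_s}{\phi D}.
\]

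Next I treat the right-hand side, using the definition of $W$. The key simplification is $A+s\phi_s=\phi$, which gives
\[
W_s=\frac{\phi_{ss}A+s\phi_s\phi_{ss}}{A^2}=\frac{\phi\,\phi_{ss}}{A^2}.
\]
The prefactor $A^2/(\phi D)$ therefore clears the $A^2$ denominators, and the right-hand side equals
\[
\frac{1}{\phi D}\Bigl(\phi\phi_{ss}+\bigl(\tfrac1s+\tfrac{2s}{m^2}\bigr)A\phi_s+\tfrac{2}{m^2}A^2\Bigr).
\]

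The last step, which I expect to be the only delicate one, is matching the two numerators. I multiply the bracket by $s\,m^2$ and use $A+s\phi_s=\phi$ once more, via $2s^2A\phi_s+2sA^2=2sA\phi$. This yields exactly $s\,m^2\phi\phi_{ss}+m^2A\phi_s+2sA\phi$, the numerator of the left-hand side.

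The computation therefore establishes
\[
2s+m^2\sigma_2\kappa=\frac{s\,m^2\,(\phi-s\phi_s)^2}{\phi(\phi-s\phi_s+m^2\phi_{ss})}\Bigl(W_s+\bigl(\tfrac1s+\tfrac{2s}{m^2}\bigr)W+\tfrac{2}{m^2}\Bigr).
\]
This is the displayed identity only after the bracket is rescaled by the nonvanishing factor $s\,m^2$ (on $s\neq0$, $m^2>0$). That rescaling is how the identity must be read. It is harmless for its use in the paper, the equivalence $\Phi=0\iff W_s+(\frac1s+\frac{2s}{m^2})W+\frac{2}{m^2}=0$ via \eqref{Eq:PhiFactored}. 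I would therefore prove the statement in this normalized form and record the normalization explicitly, rather than claim the literal equality.
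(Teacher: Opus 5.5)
Your proposal is correct, and the discrepancy you found is a genuine error in the printed statement, not in your algebra. Each of your steps checks out with the paper's definitions:
\[
\kappa=\frac{1}{\phi D}, \qquad W_s=\frac{\phi\,\phi_{ss}}{A^2}, \qquad N=s\,m^2\,M,
\]
where $N/(\phi D)$ is the left-hand side and $M/(\phi D)$ is the printed right-hand side. So the left-hand side is exactly $s\,m^2$ times the printed right-hand side.

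A one-line check shows the literal identity is false, not merely awkwardly normalized. For the Euclidean metric $\phi\equiv 1$ the left side is $2s$, while the printed right side is $2/m^2$. You can therefore state outright that the display needs correcting, rather than saying it ``must be read'' differently.

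It is cleanest to record the corrected identity with the $1/s$ cleared:
\[
2s+m^2\sigma_2\kappa=\frac{(\phi-s\phi_s)^2}{\phi(\phi-s\phi_s+m^2\phi_{ss})}\Bigl(s\,m^2\,W_s+(m^2+2s^2)\,W+2s\Bigr).
\]
This form holds identically, including at $s=0$.

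The paper gives no proof of this proposition; it is quoted from the author's earlier work on the T-tensor. So there is no alternative route to compare, and your direct computation is the natural proof.

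You are also right that the error is harmless downstream. The identity is used only through the zero set of the factor $2s+m^2\sigma_2\kappa$ in $\Phi$. That factor reduces to the ODE $W_s+\bigl(\frac{1}{s}+\frac{2s}{m^2}\bigr)W=-\frac{2}{m^2}$ in the ``trivial case'' of the $\sigma T$ characterization. This zero set is unchanged by the factor $s\,m^2$ wherever $s\neq 0$ and $m^2>0$.
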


\begin{theorem}\cite{Elgendi-T-tensor-2025}
\label{Theorem_T-condition}
A spherically symmetric Finsler metric $F = u \phi(r,s)$ with $n \geq 3$ satisfies the T-condition (i.e., $T_{hijk} = 0$) if and only if either $F$ is Riemannian, or
\begin{equation} \label{berwald}
\phi(r,s) = a(r)\, s^{\frac{c(r) r^2 - 1}{c(r) r^2}} (r^2 - s^2)^{\frac{1}{2 c(r) r^2}},
\end{equation}
for some smooth functions $a(r)$ and $c(r)$.
\end{theorem}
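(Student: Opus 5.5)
Theorem \ref{Theorem_T-condition} is quoted from \cite{Elgendi-T-tensor-2025}, so I sketch how I would reprove it from Proposition \ref{Lem:T-tensor_SSFM} and Proposition \ref{Prop:Phi=0}. The strategy is to decouple the three coefficients using the independence of the tensorial pieces, solve the scalar equation $\Phi=0$ as a linear ODE for $W$, and then check that the remaining coefficients vanish automatically.

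\textbf{Decoupling.} For $n\ge 3$, the three tensors multiplying $\Phi$, $\Psi$, $\Omega$ are linearly independent at a generic point. To see this, contract with vectors orthogonal to both $y$ and $x$ (possible since $n\geq 3$), on which $m_i$ vanishes while $\hbar$ does not. This isolates $\Phi$, so $T_{hijk}=0$ forces $\Phi=0$. Contracting next with mixed vectors, one along $m$ and the rest orthogonal, isolates $\Psi$, and then $\Omega$. Hence $T=0$ if and only if $\Phi=\Psi=\Omega=0$.

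\textbf{Solving $\Phi=0$.} By \eqref{Eq:PhiFactored}, $\Phi=0$ means $\sigma_2=0$ or $2s+\sigma_2m^2\kappa=0$.
\begin{itemize}
\item If $\sigma_2\equiv0$, then $(\phi-s\phi_s)\phi_s=s\phi\phi_{ss}$. I expect this to force $\phi^2$ to be quadratic in $s$, i.e.\ $F$ Riemannian. Indeed $\sigma_2=-\tfrac12 s\,\partial_s\!\big((\phi^2)_s/s\big)$ up to rearrangement, so $\sigma_2\equiv 0$ gives $(\phi^2)_s=2\alpha(r)s$.
\item Otherwise Proposition \ref{Prop:Phi=0} reduces $\Phi=0$ to the linear first-order ODE
\[
W_s+\Big(\frac1s+\frac{2s}{m^2}\Big)W+\frac{2}{m^2}=0 .
\]
An integrating factor is $s/(r^2-s^2)$. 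The general solution is
\[
W=\frac{-2s/ r^2\cdot(\ldots)+C(r)(r^2-s^2)}{s}.
\]
Explicitly, $(sW/m^2)_s=-2s/m^4$ gives $sW/m^2=-1/m^2+k(r)$, so $W=(k m^2-1)/s$.
\end{itemize}

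\textbf{Recovering $\phi$.} Substituting into \eqref{EQ:W_1} with $1+sW=km^2$ gives
\[
\frac{W}{1+sW}=\frac1s-\frac{1}{k s(r^2-s^2)}.
\]
Partial fractions,
\[
\frac{1}{s(r^2-s^2)}=\frac{1}{r^2}\Big(\frac1s+\frac{s}{r^2-s^2}\Big),
\]
integrate this to
\[
\log\phi=\Big(1-\frac1{kr^2}\Big)\log s+\frac{1}{2kr^2}\log(r^2-s^2)+\log a(r).
\]
Setting $c=k$ yields exactly \eqref{berwald}. The lower limit $0$ in \eqref{EQ:W_1} is replaced by an arbitrary base point absorbed into $a(r)$.

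\textbf{Main obstacle: $\Psi=\Omega=0$.} For this family, $\Psi=0$ and $\Omega=0$ must hold identically rather than impose new constraints. Since $\Phi=0$ gives $\kappa=-2s/(\sigma_2m^2)$, I would substitute this into \eqref{Eq:PsiFactored} and \eqref{Eq:OmegaFactored} to eliminate $\kappa$. I would then use the identities \eqref{Identites_sigmas} together with the explicit $\phi_s/\phi=(km^2-1)/(k s m^2)$ to reduce each to a rational function of $s$, and verify that it vanishes.

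A cleaner route would be to observe that differentiating the relation $\Phi=0$ in $s$ produces $\Psi$ as a combination of $\Phi$ and $\Phi_s$, and similarly for $\Omega$. This likely reflects the homogeneity/symmetry of $T$ in its indices. I would attempt this structural relation first and fall back on direct computation. Conversely, Riemannian metrics clearly have $T=0$, which completes the equivalence.
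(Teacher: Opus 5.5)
Your derivation of the necessary condition is correct and follows the natural route. The paper only quotes this theorem, but your $W$-reduction is the one underlying its $\sigma T$ computation. The branch $\sigma_2\equiv0$ does force $\phi^2=\alpha(r)s^2+\gamma(r)$; the exact identity is $\sigma_2=-\tfrac12 s^2\,\partial_s\bigl((\phi^2)_s/s\bigr)$. The ODE does integrate to $W=(cm^2-1)/s$ and then to \eqref{berwald}. The garbled display containing ``$(\ldots)$'' should simply be deleted.

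There is one slip in the decoupling. With one slot along $m$ and three along $v\perp x,y$, every term vanishes, so nothing is isolated. Use $T(v,v,v,v)=3|v|^4\Phi$, $T(v,v,m,m)=|v|^2m^2(\Phi+m^2\Psi)$ and $T(m,m,m,m)=m^4(3\Phi+6m^2\Psi+m^4\Omega)$ instead.

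The genuine gap is the converse. You never show that $\Psi$ and $\Omega$ vanish on \eqref{berwald}; you only expect it, and the structural mechanism you suggest is unproved. Since $\Phi,\Psi,\Omega$ are independent coefficients, $\Phi=0$ alone does not give $T=0$, so the ``if'' half of the equivalence is at present unsupported.

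It can be closed as follows. Put $D=\phi-s\phi_s$, $\beta=1+sW$, $V=W-sW_s$. Then $\phi=\beta D$, $\phi_s=WD$, $\phi_{ss}=W_sD/\beta$, $\sigma_2=VD^2$, and $\mu_s=D^2M$ with $M=W_{ss}+2W_sV/\beta$. Also $\rho_0=1/(\beta D^2)$, $\kappa=1/\bigl(D^2(\beta+m^2W_s)\bigr)$ and $\rho_3=-W_s/\bigl(\beta D^2(\beta+m^2W_s)\bigr)$. Substituting into \eqref{Eq:PhiFactored} and \eqref{Eq:PsiFactored} gives
\[
\Phi=-\frac{\phi D^2\,V X}{4u(\beta+m^2W_s)},\qquad
\Psi=-\frac{\phi D^2\,W_{ss} X}{4u(\beta+m^2W_s)},\qquad
X:=2s\beta+sm^2W_s+m^2W .
\]
Here $X=sm^2\bigl(W_s+(\tfrac1s+\tfrac{2s}{m^2})W+\tfrac2{m^2}\bigr)$ is exactly your ODE; it is the common factor the paper displays in its $\sigma T$ proof. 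So $V\Psi=W_{ss}\Phi$, and $\Psi=0$ comes for free with no differentiation.

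For \eqref{Eq:OmegaFactored} one finds
\[
\Omega=\frac{\phi D^2}{4u}\Bigl[\frac{2M(W+sW_s)}{\beta}+2M_s-\frac{3(2V+m^2M)\,W_{ss}}{\beta+m^2W_s}\Bigr].
\]
This involves $W_{sss}$, so it is not killed algebraically by $X=0$. Here you genuinely need the $s$-derivative of the ODE, or simply the explicit solution. With $K=cr^2-1$ one has $\beta=cm^2$, $V=2K/s$, $W+sW_s=-2cs$, $W_{ss}=2K/s^3$, $\beta+m^2W_s=-Km^2/s^2$ and $M=2K(1-K-3cs^2)/(cs^3m^2)$. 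The bracket then vanishes identically.

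These two checks, together with $C_{ijk}=0\Rightarrow T_{hijk}=0$ for Riemannian metrics, complete the proof.
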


 \section{T-condition and $\sigma$T-condition}

  This section focuses on determining when metrics with the $T$-condition are Berwaldian, and on characterizing the metrics that satisfy the $\sigma T$-condition.
We start with the following theorem.

 \begin{theorem} 
A spherically symmetric Finsler metric $F=u\phi(r,s)$ with $n \geq 3$ satisfies the $T$-condition is Berwaldian if and only if $c(r)=\frac{C}{r^2}$, where $C$ is a constant not equal to $1$.
\end{theorem}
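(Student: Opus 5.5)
The plan is to test the explicit family \eqref{berwald} of Theorem \ref{Theorem_T-condition} against the Berwald criterion \eqref{Berwald_conditions}. Those two conditions say exactly that $P=p(r)\,s$ and $Q=q_0(r)+q_2(r)s^2$. Write $k(r):=\frac{1}{c(r)r^2}$, so that
\[
\ln\phi=\ln a+(1-k)\ln s+\tfrac{k}{2}\ln(r^2-s^2).
\]
All quantities in \eqref{P,Q} can then be expressed through logarithmic derivatives:
\[
\frac{\phi_s}{\phi}=\frac{1-k}{s}-\frac{ks}{r^2-s^2},\qquad
\frac{\phi_r}{\phi}=\frac{a'}{a}-k'\ln s+\frac{k'}{2}\ln(r^2-s^2)+\frac{kr}{r^2-s^2}.
\]

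\textbf{Necessity.} The first step is to look at the numerator of $Q$. We can write
\[
-\phi_r+s\phi_{rs}=\phi\Big[\frac{\phi_r}{\phi}\Big(\frac{s\phi_s}{\phi}-1\Big)+s\,\partial_s\Big(\frac{\phi_r}{\phi}\Big)\Big].
\]
The transcendental piece $k'\big(-\ln s+\tfrac12\ln(r^2-s^2)\big)$ therefore enters $Q$ multiplied by the rational factor $\big(\frac{s\phi_s}{\phi}-1\big)\big/\big(\phi-s\phi_s+m^2\phi_{ss}\big)\cdot\phi$. This factor is not identically zero, because $s\phi_s/\phi-1=-k-ks^2/m^2\neq 0$ for $k\neq0$; when $k=0$ the metric is Riemannian. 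Every other contribution is rational in $s$ and $\sqrt{r^2-s^2}$. So if $Q$ is to have the polynomial form $q_0+q_2s^2$, the logarithmic term must drop out. I would make this precise by fixing $r$ and comparing behaviour as $s\to0^+$, or by applying $\partial_s$ enough times that the rational part is annihilated while the $\ln s$ part is not. This forces $k'=0$. Hence $k$ is constant, i.e. $c(r)=C/r^2$ for a constant $C$.

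\textbf{Sufficiency.} Next I would take $k=1/C$ constant and compute explicitly. Here $\phi_r/\phi=a'/a+kr/m^2$, and the denominator $\phi-s\phi_s+m^2\phi_{ss}$ is a rational expression in $s$ and $m^2$. Substituting into \eqref{P,Q} and simplifying, I expect $Q$ to reduce to the form $q_0(r)+q_2(r)s^2$, and then $P=-\frac{Q}{\phi}(s\phi+m^2\phi_s)+\frac{1}{2r\phi}(s\phi_r+r\phi_s)$ to reduce to $p(r)s$. This would verify \eqref{Berwald_conditions}. Alternatively, I could guess $P,Q$ of Berwald type and check the metrizability equations $C_1=C_2=0$ in \eqref{Comp_C_C_2}, which are linear in $\phi$ and easy to test with the logarithmic derivatives above.

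\textbf{Excluding $C=1$.} For $k=1$ we get $\phi=a\sqrt{r^2-s^2}$. Then $\phi-s\phi_s=ar^2/m$ and $m^2\phi_{ss}=-ar^2/m$, so $\phi-s\phi_s+m^2\phi_{ss}\equiv0$. In that case $Q$ in \eqref{P,Q} is undefined and the metric is degenerate, which is why the value $C=1$ is excluded.

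The main obstacle is the rigorous elimination of the logarithmic term in the necessity step: I need the $\ln s$ coefficient in $Q$ to be genuinely nonvanishing, so that it cannot cancel against the algebraic terms. I would handle this with the $s\to0$ asymptotic comparison. A secondary obstacle is the algebraic simplification in the sufficiency step.
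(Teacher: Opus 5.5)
Your proposal is correct. It reaches the same criterion as the paper: since $k=1/(cr^2)$ gives $k'=-(rc'+2c)/(c^2r^3)$, your condition $k'=0$ is exactly the paper's ODE $rc'+2c=0$. The argument is organized differently, though.

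\textbf{The paper's route.} The paper computes $P$ and $Q$ in closed form for arbitrary $c(r)$ with \textsc{Maple}. It finds $P=\frac{sc'}{2cr}$, so $P-sP_s\equiv0$ for every $c$, and
\[
Q_s-sQ_{ss}=-\frac{(rc'+2c)s}{c(r^2-s^2)^2(cr^2-1)}.
\]
This single expression settles both directions at once.

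\textbf{Your route.} You split the equivalence into two parts.
\begin{itemize}
\item For necessity you isolate only the transcendental part of $Q$. It is indeed
\[
-\frac{k'\,s^2}{2(1-k)r^3}\Big(\ln s-\tfrac12\ln(r^2-s^2)\Big),
\]
which agrees with the logarithmic terms in the paper's $Q$. Your $s\to0^+$ comparison then forces $k'=0$ without ever computing the rational remainder.
\item For sufficiency you only need the constant case $k=1/C$. There one gets
\[
P=-\frac{s}{r^2},\qquad Q=\frac{1}{2r^2}+\frac{C}{2(C-1)r^3}\Big(\frac{a'}{a}+\frac{2}{r}\Big)s^2,
\]
exactly as you expected.
\end{itemize}

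\textbf{What each buys.} Your version makes necessity computer-free and conceptual: the only obstruction to being Berwald is the $r$-dependence of the exponent, which produces the logarithm. It also justifies the exclusion $C\neq1$, which the paper merely asserts. Indeed, $\phi-s\phi_s+m^2\phi_{ss}=-k(1-k)r^4\phi/(s^2m^2)$ vanishes identically at $k=1$, so $Q$ is undefined and $g_{ij}$ is degenerate. The paper's version gives the explicit spray for every $c(r)$ and one quantity that controls both directions, at the cost of relying on a CAS.

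\textbf{Two small slips.}
\begin{itemize}
\item Your alternative of differentiating in $s$ until the rational part is annihilated does not work as phrased. Repeated derivatives never kill a non-polynomial rational function, and $\partial_s^3(s^2\ln s)=2/s$ is itself rational. Rely on the $s\to0^+$ asymptotics, or on the fact that $\ln s$ is not algebraic.
\item The remark that $k=0$ gives a Riemannian metric is wrong: $\phi=a(r)s$ gives $F=a(r)\langle x,y\rangle$, which is degenerate. This is moot, since $k=1/(cr^2)$ never vanishes.
\end{itemize}
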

 \begin{proof}
 By performing explicit calculations, for instance with the aid of \textsc{Maple}, and using the functions $P$ and $Q$ given in \eqref{P,Q} and \eqref{berwald}, we obtain
  $$P=\frac{sc'}{2cr} , \quad c':=\frac{dc}{dr} ,   $$
  $$ Q=\frac{\left(2cr^3(a'c - c' a ) -a  (rc'  +2c )\ln(r^2-s^2)  +2 a  (r   c' +2c)\ln(s)\right)s^2   +2r^2ac (cr^2-1)}{4a c r^4(cr^2-1)}.$$
  Moreover, we have 
  $$ P-sP_s=0, \quad  Q_s-sQ_{ss} =-\frac{(rc'+2c)s}{  c( r^2-s^2)^2(cr^2-1)}.$$
  For $n \geq 3$, $F$  is Berwaldian if and only if $$ P-sP_s=0, \quad  Q_s-sQ_{ss}=0.$$
  So,  $F$  is Berwaldian if and only if the ODE  $rc'+2c=0$ is satisfied.  Which has  the solution
  $$c=\frac{C}{r^2},$$
  where $C$ is a constant with $C\neq 1$.
 \end{proof} 
 
\begin{proposition}\label{raised-tensor} The T-tensor $T^h_{ijk}:=g^{hr}T_{rijk}$ of a spherically symmetric Finsler metric $F=u\phi(r,s)$ is given by
\begin{eqnarray*}
T^h_{ijk}&=&\rho_0\Phi (\hbar^h_{i}\hbar_{jk}+\hbar^h_{j}\hbar_{ik}+\hbar^h_{k}\hbar_{ij})+\rho_0\Psi (\hbar^h_{k}m_im_j+\hbar^h_{j}m_im_k+\hbar^h_{i}m_jm_k+\hbar_{ij}m^hm_k\\
&&+\hbar_{jk}m_im^h+\hbar_{ik}m_jm^h)+\rho_0\Omega  m^hm_im_jm_k+(\rho_3 x^h+\frac{\rho_2}{u} y^h)(\Phi(\hbar_{ik}m_{j}+\hbar_{ij}m_{k}+\hbar_{jk}m_{i})\\
&&+\Psi(m^2(\hbar_{ik}m_{j}+\hbar_{ij}m_{k}+\hbar_{jk}m_{i})+3m_im_jm_k)+\Omega m^2 m_im_jm_k).
\end{eqnarray*}
\end{proposition}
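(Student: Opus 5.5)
The plan is to contract the expression for $T_{rijk}$ from Proposition \ref{Lem:T-tensor_SSFM} with $g^{hr}$ as given in \eqref{Eq:g^ij}, and to simplify using the identities \eqref{Identites_m^2} and \eqref{Identites_hbar}. Here $\hbar^h_i:=\delta^{hr}\hbar_{ri}$ and $m^h:=\delta^{hr}m_r$. Write
\[
g^{hr}=\rho_0\delta^{hr}+\frac{\rho_1}{u^2}y^hy^r+\frac{\rho_2}{u}(x^hy^r+x^ry^h)+\rho_3x^hx^r .
\]
We then treat each of the four pieces separately.

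\emph{The $\rho_0\delta^{hr}$ piece.} This simply raises the index $r$ with the Euclidean metric. It produces the first three groups of terms, $\rho_0\Phi(\cdots)+\rho_0\Psi(\cdots)+\rho_0\Omega\,m^hm_im_jm_k$, with $\hbar_{hi}\mapsto\hbar^h_i$ and $m_h\mapsto m^h$.

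\emph{Terms containing $y^r$.} Every term of $T_{rijk}$ carries either $\hbar_{r\cdot}$ or $m_r$ in the slot $r$. We have $y^r\hbar_{ri}=y_i-y_i=0$, and $y^rm_r=0$ by \eqref{Identites_m^2}. So $y^rT_{rijk}=0$, which is just the total symmetry of $T$ combined with its $y$-orthogonality. Hence the $\rho_1$ term vanishes, and so does the part $\frac{\rho_2}{u}x^hy^rT_{rijk}$.

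\emph{Terms containing $x^r$.} What survives is $(\rho_3x^h+\frac{\rho_2}{u}y^h)\,x^rT_{rijk}$. To compute $x^rT_{rijk}$ we use $x^r\hbar_{ri}=m_i$ and $x^rm_r=m^2$:
\begin{itemize}
\item the $\Phi$-block gives $\Phi(m_i\hbar_{jk}+m_j\hbar_{ik}+m_k\hbar_{ij})$;
\item in the $\Psi$-block, the three terms with $\hbar_{r\cdot}$ give $3m_im_jm_k$, and the three with $m_r$ give $m^2(\hbar_{ij}m_k+\hbar_{jk}m_i+\hbar_{ik}m_j)$;
\item the $\Omega$-block gives $\Omega m^2m_im_jm_k$.
\end{itemize}
Collecting these reproduces exactly the bracket in the stated formula.

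The only potential obstacle is bookkeeping. One has to identify which of the six $\Psi$-terms carry the contracted index in $\hbar$ and which carry it in $m$. One also has to confirm that the $y$-contractions truly vanish, since this is what removes $\rho_1$ from the final formula. No differential identities are needed; the argument is purely algebraic.
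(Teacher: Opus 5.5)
Your proposal is correct and follows the same route as the paper. The paper simply contracts $g^{hr}$ from \eqref{Eq:g^ij} with $T_{rijk}$ from Proposition \ref{Lem:T-tensor_SSFM} and simplifies with \eqref{Identites_m^2}; your observation that $y^rT_{rijk}=0$ (which removes the $\rho_1$ term and half of the $\rho_2$ term) and your explicit evaluation of $x^rT_{rijk}$ are exactly the details the paper leaves implicit, and \eqref{Identites_hbar} is not actually needed.
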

\begin{proof}
The proof is a straightforward calculations by using \eqref{Eq:g^ij}, \eqref{Identites_m^2}, and  Proposition \ref{Lem:T-tensor_SSFM}.
\end{proof}

\begin{theorem}\label{sigma$T$-condition}
A spherically symmetric metric $F=u\phi(r,s)$ with $n\geq 3$ satisfies the $\sigma$T-condition if and only if
\begin{itemize}
\item[(a)] $\Phi+m^2\Psi=0$,

\item[(b)] $m^2\Omega+3 \Psi=0$.
\end{itemize}
\end{theorem}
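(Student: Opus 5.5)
The plan is to write the $\sigma T$-condition in spherically symmetric form and then contract the formula of Proposition \ref{raised-tensor}. Recall that the $\sigma T$-condition for a conformal factor $\sigma$ reads $\sigma_h T^h_{ijk}=0$ with $\sigma_h=\partial\sigma/\partial x^h$. In the spherically symmetric setting $\sigma=\sigma(r)$, so $\sigma_h=\frac{\sigma'(r)}{r}x_h$. For a non-trivial deformation ($\sigma'\neq 0$) the condition is therefore equivalent to
\[
x_h T^h_{ijk}=0 .
\]

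\textbf{Contraction.} I would contract the formula of Proposition \ref{raised-tensor} with $x_h$, using \eqref{Identites_m^2} together with $x_h y^h=us$ and $x_hx^h=r^2$. These give $x_h\hbar^h_i=m_i$ and $x_hm^h=m^2$. Write $S_{ijk}:=\hbar_{ij}m_k+\hbar_{jk}m_i+\hbar_{ik}m_j$. The contributions are:
\begin{itemize}
\item the $\rho_0\Phi$-term gives $\rho_0\Phi S_{ijk}$;
\item the $\rho_0\Psi$-term gives $\rho_0\Psi\,(3m_im_jm_k+m^2S_{ijk})$;
\item the $\rho_0\Omega$-term gives $\rho_0\Omega\, m^2m_im_jm_k$;
\item the last term gives $(\rho_3r^2+\rho_2 s)\big[(\Phi+m^2\Psi)S_{ijk}+(3\Psi+m^2\Omega)m_im_jm_k\big]$.
\end{itemize}
Collecting these, the contraction factorizes as
\[
x_hT^h_{ijk}=(\rho_0+\rho_3r^2+\rho_2s)\Big[(\Phi+m^2\Psi)\,S_{ijk}+(m^2\Omega+3\Psi)\,m_im_jm_k\Big].
\]

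\textbf{Nonvanishing of the prefactor.} Putting the $\rho$'s over the common denominator $\phi(\phi-s\phi_s)(\phi-s\phi_s+m^2\phi_{ss})$, the numerator is
\[
\phi-s\phi_s+m^2\phi_{ss}-r^2\phi_{ss}-\tfrac{s}{\phi}\big(\phi\phi_s-s\phi_s^2-s\phi\phi_{ss}\big)=\frac{(\phi-s\phi_s)^2}{\phi}.
\]
Hence
\[
\rho_0+\rho_3r^2+\rho_2s=\frac{\phi-s\phi_s}{\phi^2(\phi-s\phi_s+m^2\phi_{ss})},
\]
which is positive by the regularity conditions. So the $\sigma T$-condition is equivalent to
\[
(\Phi+m^2\Psi)S_{ijk}+(m^2\Omega+3\Psi)m_im_jm_k=0 .
\]

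\textbf{Separating the two coefficients.} This last step, extracting (a) and (b) from one tensor identity, is where $n\ge 3$ is used. At a point with $y$ not parallel to $x$ (so $m\neq0$), choose a vector $v$ that is orthogonal to both $y$ and $x$; this is possible since $n\ge3$. Then $v^im_i=0$ and $\hbar_{ij}v^iv^j=|v|^2$. Contracting with $v^iv^j$ and then with $x^k$ (using $x^km_k=m^2$) gives $(\Phi+m^2\Psi)|v|^2m^2=0$, hence (a). With (a) in hand, contracting with $x^ix^jx^k$ gives $(m^2\Omega+3\Psi)m^6=0$, hence (b). By continuity both identities extend to the points where $m=0$.

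Conversely, (a) and (b) make the bracket vanish identically, so $x_hT^h_{ijk}=0$. This gives the equivalence.
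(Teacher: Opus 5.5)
Your proposal is correct and follows essentially the same route as the paper. You take $\sigma=\sigma(r)$, contract Proposition~\ref{raised-tensor} with $x_h$ to obtain $(\rho_0+s\rho_2+r^2\rho_3)\bigl[(\Phi+m^2\Psi)S_{ijk}+(m^2\Omega+3\Psi)m_im_jm_k\bigr]$, and then separate the two coefficients using $n\geq 3$. Beyond the paper, you supply two details it leaves implicit: the value $\rho_0+s\rho_2+r^2\rho_3=(\phi-s\phi_s)/\bigl(\phi^2(\phi-s\phi_s+m^2\phi_{ss})\bigr)\neq 0$, and the explicit contraction with a vector $v\perp x,y$ that splits the coefficients. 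Your coefficient $3\Psi+m^2\Omega$ is also the right one; the paper's last displayed line writes $\Psi+m^2\Omega$, which is a typo.
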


\begin{proof}  Since the position on the base manifold is determined by the argument $r$, we may, without loss of generality, assume that $\sigma = \sigma(r)$.  
Then, by Proposition~\ref{raised-tensor}, we obtain
\[
\sigma_h := \partial_h \sigma 
= \sigma' \, \frac{x_h}{r}, \quad \sigma':=\frac{d\sigma}{dr}.
\]
Now, we have
\begin{eqnarray*}
\sigma_h T^h_{ijk}&=& \frac{\sigma_r}{r} x_h  T^h_{ijk}\\
                  &=& \frac{\sigma_r}{r}  \Big{(} \rho_0 \Phi (\hbar_{ik}m_{j}+\hbar_{ij}m_{k}+\hbar_{jk}m_{i})+\rho_0 \Psi(3m_im_jm_k+m^2(\hbar_{ik}m_{j}+\hbar_{ij}m_{k}+\hbar_{jk}m_{i}))\\
                  &&+ \rho_0\Omega m^2 m_im_jm_k+(s\rho_2+r^2\rho_3)(\Phi(\hbar_{ik}m_{j}+\hbar_{ij}m_{k}+\hbar_{jk}m_{i})+\Omega m^2 m_im_jm_k\\
                  &&+\Psi (m^2(\hbar_{ik}m_{j}+\hbar_{ij}m_{k}+\hbar_{jk}m_{i})+3m_im_jm_k\Big{)}\\
                  &=& \frac{\sigma_r}{r} (\rho_0+s\rho_2+r^2\rho_3) \Big{(} (\Phi+m^2\Psi)(\hbar_{ik}m_{j}+\hbar_{ij}m_{k}+\hbar_{jk}m_{i})+(\Psi+m^2\Omega)m_im_jm_k\Big{)}
\end{eqnarray*}
 By setting $\sigma_h T^h_{ijk}=0$. For, $n\geq 3$, one can see that the property 
 $$\lambda   (\hbar_{ik}m_{j}+\hbar_{ij}m_{k}+\hbar_{jk}m_{i})+\mu m_im_jm_k=0$$
 implies that $\lambda=0$ and $\mu=0$, where $\lambda$ and $\mu$ are scalar functions on $T\mathbb{B}(r_0)$. This completes the proof. 
\end{proof}

\begin{theorem}
A spherically symmetric metric $F=u\phi(r,s)$     with $n\geq 3$ satisfies the $\sigma$T-condition if and only if it satisfies the $T$-condition or $\phi$ is given by
\begin{equation}\label{Landsberg}
\phi(r, s) = A(r) \cdot \exp\left( \int \frac{c_1(r)s + c_2(r)\sqrt{r^2 - s^2}}{1 + c_1(r)s^2 + c_2(r)s\sqrt{r^2 - s^2}} \, ds \right)
\end{equation}
\end{theorem}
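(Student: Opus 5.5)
Starting from Theorem \ref{sigma$T$-condition}, the $\sigma T$-condition is equivalent to the pair $\Phi+m^2\Psi=0$ and $m^2\Omega+3\Psi=0$. I would rewrite both in terms of $W=\phi_s/(\phi-s\phi_s)$. This is the variable in which Proposition \ref{Prop:Phi=0} already expresses $\Phi$, up to the nonvanishing factor $-\phi\sigma_2/(4u)$. So the plan is to compute $\Psi$ and $\Omega$ from \eqref{Eq:PsiFactored} and \eqref{Eq:OmegaFactored} as rational expressions in $\phi,\phi_s,\phi_{ss},\phi_{sss}$. I would then substitute $\phi_s=W\phi/(1+sW)$ and the derived formulas for $\phi_{ss},\phi_{sss}$, which are obtained by differentiating \eqref{EQ:W_1}. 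The factor $\phi$ drops out of each condition, leaving ODEs in $s$ for $W(r,s)$ alone, with $r$ as a parameter.

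I expect condition (a) to be the key one. It should take the form
\[
\sigma_2\Big(W_s+\big(\tfrac1s+\tfrac{2s}{m^2}\big)W+\tfrac{2}{m^2}\Big)\cdot(\text{factor})+m^2\Psi=0,
\]
and after simplification it should reduce to a second-order linear ODE in $W$, or equivalently in the integrand $V:=W/(1+sW)=\phi_s/\phi$. Condition (b) should then either be a differential consequence of (a) or fix nothing new, since $\Omega$ involves one more derivative than $\Psi$. I would check this by differentiating (a) in $s$ and comparing.

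Next I split into cases. If $\sigma_2=0$, or more generally if $\Phi=\Psi=\Omega=0$ separately, then the metric satisfies the $T$-condition, and Theorem \ref{Theorem_T-condition} applies; the Riemannian case is included there. Otherwise I solve the ODE for $V$. I would guess the ansatz $V=N/D$ with $N=c_1s+c_2\sqrt{r^2-s^2}$ and $D=r^2+c_1s^2+c_2s\sqrt{r^2-s^2}$. The normalization in the statement's denominator, $1$ versus $r^2$, only rescales $c_1,c_2$, and I would keep it as in the statement. I would verify that this ansatz solves the ODE. The two free functions $c_1(r),c_2(r)$ should match the two-dimensional solution space of a second-order linear equation; note that $s$ and $\sqrt{r^2-s^2}$ are the natural independent solutions appearing in $Q$ in Theorem \ref{Theorem_A}. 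Integrating $\phi_s/\phi=V$ then gives \eqref{Landsberg}, with $A(r)$ as the integration constant. The converse is direct substitution.

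The main obstacle is the algebra of reducing (a) and (b) to a clean ODE. $\Psi$ and $\Omega$ contain $\rho_0,\rho_3,\kappa$ with denominators $\phi-s\phi_s+m^2\phi_{ss}$, and these must be cleared. For this I would use computer algebra, as the paper does, and look for the common factor $\phi-s\phi_s+m^2\phi_{ss}$ together with the linear structure in $V$. If the resulting equation is instead a Riccati-type first-order equation for $V$, I would linearize it via $V=\phi_s/\phi$, i.e. back to a linear equation for $\phi$ or for $\phi-s\phi_s$.
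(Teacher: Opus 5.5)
\textbf{Verdict.} Your outline has the same skeleton as the paper's proof: reduce to conditions (a) and (b), pass to $W$, solve an ODE in $s$, then check (b) on the solutions. There is, however, a genuine gap. The step that produces the alternative ``$T$-condition or \eqref{Landsberg}'' is missing, and your prediction of its shape is wrong: condition (a) does not reduce to a second-order linear ODE.

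\textbf{The missing factorization.} The key observation is that $\Psi$ carries the same factor as $\Phi$. Use the three identities
\[
\kappa=\rho_0+\rho_3m^2=1/\bigl(\phi(\phi-s\phi_s+m^2\phi_{ss})\bigr),\qquad (\sigma_2)_s=-s\mu_s,\qquad \sigma_2=(W-sW_s)(\phi-s\phi_s)^2 .
\]
With these, formulas \eqref{Eq:PhiFactored} and \eqref{Eq:PsiFactored} become
\[
\Phi=-\frac{\phi(\phi-s\phi_s)^2}{4u}\,(W-sW_s)\,G,\qquad
\Psi=-\frac{\phi(\phi-s\phi_s)^2}{4u}\,W_{ss}\,G,\qquad
G:=2s+m^2\sigma_2\kappa .
\]
So (a) is the product equation $G\cdot\bigl(m^2W_{ss}-sW_s+W\bigr)=0$, and the two branches are:
\begin{itemize}
\item $G=0$. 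This is equivalent to the first-order equation $W_s+\bigl(\frac1s+\frac{2s}{m^2}\bigr)W+\frac{2}{m^2}=0$. Its solutions $W=(c\,m^2-1)/s$ integrate exactly to the $T$-condition family \eqref{berwald}.
\item $m^2W_{ss}-sW_s+W=0$. This equation is linear in $W$ (not in $V$), with general solution $W=c_1s+c_2\sqrt{r^2-s^2}$. That is exactly your ansatz, since then $1+sW$ is the denominator in \eqref{Landsberg}.
\end{itemize}

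\textbf{Why your case split fails as written.} The case $\sigma_2=0$ means $W=c_1s$. That is just the Riemannian member $c_2=0$ of the second family. It is not where the non-Riemannian $T$-condition metrics come from: on \eqref{berwald} one has $W-sW_s=2(cr^2-1)/s\neq0$ unless $cr^2=1$.

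Moreover, in your ``otherwise'' case you cannot extract the linear ODE from (a) without dividing by $G$. What justifies this is that $G=0$ forces $\Phi=\Psi=0$, after which (b) forces $\Omega=0$, i.e.\ $T=0$.

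Once the factorization is in hand, the rest of your plan goes through as in the paper. You integrate $\phi_s/\phi=W/(1+sW)$. You then check (b) on the family by direct substitution; one finds $m^2\Omega=-3\Psi$, and this also gives the converse.
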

\begin{proof} Let  $F=u\phi(r,s)$  be a spherically symmetric metric satisfying the     $\sigma$T-condition. 
Using the facts that 
$$W=\frac{\phi_s}{\phi-s\phi_s}, \quad W_s=\frac{\phi\phi_{ss}}{(\phi-s\phi_s)^2},$$
one can see that
$$\sigma_2=(W-sW_s)(\phi-s\phi_s)^2.$$
Therefore,  we can write $\Phi$ and $\Psi$ in terms of $W(r,s)$ and its derivations with respect to $s$, as  follows:
$$\Phi=-\frac{\phi (\phi-s\phi_s)^2(W-sW_s)(sm^2 \phi_s W_s+(2s\phi+m^2\phi_s)W)}{4 u(m^2 \phi_s W_s+\phi W)},$$
$$\Psi=-\frac{\phi (\phi-s\phi_s)^2W_{ss}(sm^2 \phi_s W_s+(2s\phi+m^2\phi_s)W)}{4 u(m^2 \phi_s W_s+\phi W)}.$$

Now,  using the fact that $\phi-s\phi_s\neq 0$, the condition $\Phi+m^2\Psi=0$ gives the following two possible PDEs
\begin{equation}\label{nontrivial_case}
(r^2-s^2)W_{SS}-sW_s+W=0
\end{equation}
or
\begin{equation}\label{trivial_case}
s(r^2-s^2) \phi_s W_s+2s\phi W+m^2\phi_sW=0
\end{equation}
The PDE \eqref{trivial_case}, using the fact that $W(\phi-s\phi_s)=\phi_s$, can be rewritten in the form
$$W_s+\left(\frac{1}{s}+\frac{2s}{m^2}\right)W=-\frac{2}{m^2}$$
which gives the trivial case, that is, the T-tensor vanishes. The PDE \eqref{nontrivial_case} has the solution
$$W(r,s)=c_1(r)s+c_2(r)\sqrt{r^2-s^2}.$$
By using \eqref{EQ:W_1}, $\phi(r,s)$ is given by   

\[
\phi(r, s) = A(r) \cdot \exp\left( \int \frac{c_1(r)s +c_2(r)\sqrt{r^2 - s^2}}{1 +c_1(r)s^2 + c_2(r)s\sqrt{r^2 - s^2}} \, ds \right).
\]
To check the second condition $m^2\Omega+3 \Psi=0$ in Theorem \ref{sigma$T$-condition}, we can follow the following strategy to avoid complications. 
Using the notations $T_s:=\frac{\phi_s}{\phi}$ and $T_{ss}:=\frac{\phi_{ss}}{\phi}$, we have:
$$T_s=\frac{\phi_s}{\phi}=\frac{c_1 s+c_2 \sqrt{r^2-s^2}}{1+c_1 s^2+c_2 s\sqrt{r^2-s^2}},$$
$$T_{ss}=\frac{\partial T_s}{\partial s}+T_s^2=\frac{\phi_{ss}}{\phi}=\frac{-c_2 s+c_1 \sqrt{r^2-s^2}}{(1+c_1 s^2+c_2 s\sqrt{r^2-s^2})^2\sqrt{r^2-s^2} }.$$
Moreover, the quantities $\sigma$'s and $\rho$'s can be rewritten in the following form:
\[
\sigma_0 = \phi^2(1 - s T_s), \quad \sigma_1 =\phi^2( T_s^2 + T_{ss}), \quad \sigma_2 = \phi^2((1 - s T_s)T_s - s T_{ss}), \quad \sigma_3 =\phi^2( s^2 T_{ss} - s(1 - s T_s)T_s),
\]
\begin{align*}
\rho_0 &= \frac{1}{\phi^2(1 - s T_s)}, \\
\rho_1 &= \frac{(s  + (r^2 - s^2)T_s)(T_s - s T_s^2 - s T_{ss})}{\phi^2(1 - s T_s)(1 - s T_s + (r^2 - s^2)T_{ss})}, \\
\rho_2 &= -\frac{T_s - s T_s^2 - s T_{ss}}{\phi^2(1 - s T_s)(1- s T_s + (r^2 - s^2)T_{ss})}, \\
\rho_3 &= -\frac{T_{ss}}{\phi^2(1 - s T_s)(1 - s T_s + (r^2 - s^2)T_{ss})}.
\end{align*}
Therefore, we have
$$\mu=\sigma_1=\phi^2( T_s^2 + T_{ss}),$$
$$\mu_s=\phi^2(2T_s(T_s^2+T_{ss})+(T_s^2+T_{ss})_s)$$
$$\mu_{ss}=\phi^2(2T_s(2T_s(T_s^2+T_{ss})+(T_s^2+T_{ss})_s)+(2T_s(T_s^2+T_{ss})+(T_s^2+T_{ss})_s)_s).$$
One may use \textsc{Maple} to compute $\Phi$, $\Psi$, and $\Omega$ as follows:
$$\Phi=-\frac{r^2c_2\phi^3(2(c_1r^2+1)s+c_2r^2 \sqrt{r^2-s^2})}{4u(c_1r^2+1)\sqrt{r^2-s^2}(1+c_1s^2+c_2s\sqrt{r^2-s^2})^2},$$
\begin{align*}
\Psi=&\frac{r^2c_2\phi^3}{4u(c_1r^2+1)(r^2-s^2)^{3/2}(1+c_1s^2+c_2s\sqrt{r^2-s^2})^3}(2c_1^2r^2s^3+c_2^2r^4s-c_2^2r^2s^3+2c_1r^2s\\
&+2c_1s^3+2s+(3c_1c_2r^2s^2+c_2(r^2+s^2)+c_2s^2)\sqrt{r^2-s^2})\\
=&\frac{r^2c_2\phi^3(2(c_1r^2+1)s+c_2r^2 \sqrt{r^2-s^2})}{4u(c_1r^2+1)\sqrt{r^2-s^2}^{3/2}(1+c_1s^2+c_2s\sqrt{r^2-s^2})^2},
\end{align*}

\begin{align*}
\Omega=&-\frac{3r^2c_2\phi^3}{4u(c_1r^2+1)(r^2-s^2)^{5/2}(1+c_1s^2+c_2s\sqrt{r^2-s^2})^3}(2c_1^2r^2s^3+c_2^2r^4s-c_2^2r^2s^3+2c_1r^2s\\
&+2c_1s^3+2s+(3c_1c_2r^2s^2+c_2(r^2+2s^2)+c_2s^2)\sqrt{r^2-s^2})\\
=&-\frac{3r^2c_2\phi^3(2(c_1r^2+1)s+c_2r^2 \sqrt{r^2-s^2})}{4u(c_1r^2+1)\sqrt{r^2-s^2}^{5/2}(1+c_1s^2+c_2s\sqrt{r^2-s^2})^2}.
\end{align*}
That is, we have easily that $m^2\Omega+3 \Psi=0$.

\medskip

Now, for simplicity and in order to evaluate the integral, we employ the following trick without loss of generality. 
We replace $c_1$ (resp.~$c_2$) by $\tfrac{c_1}{r^2}$ (resp.~$\tfrac{c_2}{r^2}$), so that we obtain

\begin{align*} 
\frac{c_1 s+c_2 \sqrt{r^2-s^2}}{r^2+c_1 s^2+c_2 s \sqrt{r^2-s^2}} =&\frac{1}{2} \frac{2 c_1 s \sqrt{r^2-s^2}-2 c_2 s^2+c_2 r^2}{\sqrt{r^2-s^2}\left(r^2+c_1 s^2+c_2 s \sqrt{r^2-s^2}\right)}\\
&+\frac{1}{2} \frac{c_2 r^2}{\sqrt{r^2-s^2}\left(r^2+c_1 s^2+c_2 s \sqrt{r^2-s^2}\right)}
\end{align*}

Then, we have 
\begin{align*}
\phi(r,s)&=\exp\left(\int \frac{c_1 s+c_2 \sqrt{r^2-s^2}}{r^2+c_1 s^2+c_2 s \sqrt{r^2-s^2}}ds\right) \\
&= \sqrt{r^2+c_1 s^2+c_2 s \sqrt{r^2-s^2}} e^{\frac{c_2}{\sqrt{c_2^2-4c_1-4}} \operatorname{arctanh}{\frac{c_2s+2\sqrt{r^2-s^2}}{s\sqrt{c_2^2-4c_1-4}}}}.
\end{align*}
\end{proof}
  
  \section{Landsberg uncorn's problem and $\sigma$T-condition }
  
   In this section, we investigate spherically symmetric Landsberg metrics satisfying the $\sigma T$-condition and derive the new solutions for the Landsberg unicorn's problem in Finsler geometry.

\begin{theorem}\label{Thm:Landsberg_1}
Let $n \geq 3$ and consider the spherically symmetric Finsler metric 
\(
F(x,y) = u\,\phi(r,s),  
\)
 and
\[
\phi(r,s) =
\sqrt{\,r^2 + c_1 s^2 + c_2 s \sqrt{r^2 - s^2}\,}\,
\exp\!\left(
\frac{c_2}{\sqrt{\,c_2^2 - 4c_1 - 4\,}}
\operatorname{arctanh}\!\left(
\frac{c_2 s + 2\sqrt{r^2 - s^2}}{s\sqrt{\,c_2^2 - 4c_1 - 4\,}}
\right)
\right),
\]
where $c_1, c_2 \in \mathbb{R}$ satisfy $c_2 \neq 0$ and $c_2^2 - 4c_1 - 4 \neq 0$.
Then $F$ is a non-regular Landsberg metric which is not Berwaldian.  
The associated spray coefficients are determined   by
\[
P = \frac{s}{r^2} + \frac{c_2 \sqrt{r^2 - s^2}}{r^2(c_1+1)}, 
\qquad
Q = \frac{c_1 - 1}{2r^2(c_1+1)} - \frac{c_1 s^2}{r^4(c_1+1)} - \frac{c_2 s \sqrt{r^2 - s^2}}{r^4(c_1+1)}.
\]
Moreover, $F$ reduces to a Berwald metric if and only if $c_2 = 0$.  
If $c_2^2 - 4c_1 - 4 < 0$, the expression involving $\operatorname{arctanh}$ is replaced by $\arctan$ using the identity
\(
I \cdot \operatorname{arctanh}(I z) = \arctan(z), \,  I := \sqrt{-1}.
\)
\end{theorem}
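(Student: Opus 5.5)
Since $\phi$ is given explicitly, the proof will be a verification organized around the two metrizability conditions $C_1=C_2=0$ from \eqref{Comp_C_C_2} and the Landsberg/Berwald criteria \eqref{Zhou_L_1_2}, \eqref{Berwald_conditions}. I will not differentiate the arctanh expression directly. Instead I will use the logarithmic derivative from the proof of the previous theorem, with $c_1,c_2$ rescaled to constants:
\[
\frac{\phi_s}{\phi}=\frac{c_1 s+c_2\sqrt{r^2-s^2}}{r^2+c_1 s^2+c_2 s\sqrt{r^2-s^2}}=:T_s .
\]
This follows since $W=\phi_s/(\phi-s\phi_s)=(c_1s+c_2\sqrt{r^2-s^2})/r^2$. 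Writing $D:=r^2+c_1s^2+c_2s\sqrt{r^2-s^2}$, I also need $\phi_r/\phi$. Because the $c_i$ are constants, $\phi$ is positively homogeneous of degree one in $(r,s)$: $\phi(\lambda r,\lambda s)=\lambda\phi(r,s)$, since both $D$ and the arctanh argument scale correctly. Euler's relation then gives $r\phi_r+s\phi_s=\phi$, so $\phi_r/\phi=(1-sT_s)/r$. Consequently $\phi_r,\phi_s,\phi_{rs},\phi_{ss}$ all equal $\phi$ times rational expressions in $s$ and $\sqrt{r^2-s^2}$, and the common factor $\phi$ cancels everywhere.

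\textbf{Step 1 (spray).} I substitute these ratios into \eqref{P,Q} to compute $Q$ and then $P$, aiming to reach the stated closed forms. As a check, I will confirm that $C_1=0$ and $C_2=0$ in \eqref{Comp_C_C_2} hold identically with the stated $P,Q$ and $\phi_s/\phi=T_s$, $\phi_r/\phi=(1-sT_s)/r$. Each condition is linear in $\phi$, so it reduces to an algebraic identity in $s$ and $m=\sqrt{r^2-s^2}$. To reduce it I will clear the denominator $D$ and use $m^2=r^2-s^2$. This gives an expression $A+Bm$ with $A,B$ polynomial in $s$, and I verify $A=B=0$.

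\textbf{Step 2 (Landsberg, not Berwald).} The spray has exactly the form \eqref{Zhou_P&Q}, with constants $c_1\to 1/r^2$, $c_2\to c_2/(c_1+1)$, $c_0=-2c_1/(r^4(c_1+1))$ and $c_3=(c_1-1)/(2r^2(c_1+1))$. I will verify $L_1=L_2=0$ directly.
\begin{itemize}
\item $P_{ss}$ and $P_{sss}$ come only from the $\sqrt{r^2-s^2}$ term, and $Q_{sss}$ comes only from the $s\sqrt{r^2-s^2}$ term.
\item $L_1$ is then a combination of $\phi$ and $\phi_s$ with coefficients in $s$ and $m$. After substituting $\phi_s=T_s\phi$ and clearing $D$, the coefficient of $\phi$ must vanish.
\item $L_2$ is handled the same way. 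I expect the cancellation to work precisely because $s\phi+m^2\phi_s=\phi\,(sD+m^2(c_1s+c_2m))/D$ simplifies to $\phi\, r^2(s+c_2 m)/D$, using $c_1s^3+c_1sm^2=c_1r^2s$. So this factor does not vanish, and the Landsberg identity must come from the specific pairing with $P$.
\end{itemize}
For the Berwald test I compute $P-sP_s=c_2r^2/((c_1+1)r^2\sqrt{r^2-s^2})$. This is nonzero exactly when $c_2\neq0$, so $F$ is not Berwald, and $F$ is Berwald when $c_2=0$. In that case $Q_s-sQ_{ss}$ also vanishes, since $Q$ becomes quadratic in $s$ with no linear term.

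\textbf{Step 3 (non-regularity) and the main obstacle.} I will show that the regularity inequalities fail somewhere on $|s|\le r$. Using $\phi-s\phi_s=\phi\, r^2/D$, the natural points to examine are $s=\pm r$, where $\sqrt{r^2-s^2}=0$, and $s=0$. The arctanh argument blows up as $s\to0$, so $\phi$ itself fails to be smooth or finite there, or becomes complex. Similarly $D$ or $c_2^2-4c_1-4$ can force $\phi-s\phi_s+m^2\phi_{ss}$ to change sign or become singular; along $s\to\pm r$, $\phi_{ss}$ contains $1/\sqrt{r^2-s^2}$.

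I expect the main obstacle to be the Landsberg verification of $L_2$, where the cancellation is delicate. I will first try the substitution $s=r\sin\theta$, so that $\sqrt{r^2-s^2}=r\cos\theta$. This turns the check into a trigonometric polynomial identity, which I would expand (with Maple, as elsewhere in the paper) and confirm is identically zero.

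Finally, the $\arctan$ remark follows from $\operatorname{arctanh}(iz)=i\arctan z$. Writing $\sqrt{c_2^2-4c_1-4}=iK$ with $K>0$, the factor $\frac{c_2}{iK}\operatorname{arctanh}\!\big(\tfrac{\cdots}{isK}\big)$ becomes the real expression $\frac{c_2}{K}\arctan(\cdots)$, up to sign conventions.
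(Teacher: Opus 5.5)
Your plan is correct and follows the paper's main line: express $\phi_s/\phi$ and $\phi_r/\phi$ in terms of $s$ and $m=\sqrt{r^2-s^2}$, substitute into \eqref{P,Q} to get the stated $P,Q$, and test \eqref{Berwald_conditions}. Two steps differ.

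First, you get $\phi_r/\phi=(1-sT_s)/r=r/D$ from degree-one homogeneity of $\phi$ in $(r,s)$, which also yields $\phi_{rs}=-\tfrac{s}{r}\phi_{ss}$. The paper simply records $T_r$ and $T_{rs}$.

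Second, and more substantively, the paper concludes ``Landsberg'' by citing Theorem~\ref{Theorem_A}. As stated, that theorem only says a Landsberg metric has a spray of that form. Your direct check of \eqref{Zhou_L_1_2} supplies the converse that the paper uses tacitly. The check is also far easier than you expect. With $k=c_2/(c_1+1)$ one has $P_{ss}=-k/m^3$, $P_{sss}=-3ks/m^5$, $Q_{sss}=3k/m^5$, $P-sP_s=k/m$ and $Q_s-sQ_{ss}=-k/m^3$. The coefficients of $\phi_s$ and $\phi$ in $L_1$ are $3P_{ss}+m^2Q_{sss}$ and $P_{sss}+sQ_{sss}$. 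In $L_2$ they are $(P-sP_s)+m^2(Q_s-sQ_{ss})$ and $-sP_{ss}+s(Q_s-sQ_{ss})$. All four vanish identically, for every $\phi$. So no substitution of $T_s$, trigonometric expansion or computer algebra is needed, and there is no delicate cancellation in $L_2$.

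Some corrections:
\begin{itemize}
\item You wrote $s\phi+m^2\phi_s=\phi\,r^2(s+c_2m)/D$. The correct value is $\phi\,r^2\bigl((1+c_1)s+c_2m\bigr)/D$; you dropped the $c_1r^2s$ term you had just isolated. This is harmless, since the factor is never needed.
\item Since $\phi-s\phi_s+m^2\phi_{ss}=\phi\,r^4(1+c_1)/D^2$, the stated $P,Q$ and your identification $c_2\to c_2/(c_1+1)$ tacitly require $c_1\neq-1$.
\item For non-regularity, do not try to show that the two inequalities fail. When $c_2^2-4c_1-4<0$, one has $c_1>-1$ and $D>0$ on $|s|\le r$, so $\phi-s\phi_s=\phi\,r^2/D$ and the expression above are both positive. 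What fails is smoothness at $s=\pm r$: $\phi_{ss}/\phi=r^2(c_1m-c_2s)/(mD^2)$ is unbounded there when $c_2\neq0$, so $F$ is not $C^2$ along $y\parallel x$. When $c_2^2-4c_1-4>0$, $D$ also vanishes somewhere on $|s|\le r$.
\item Drop the $s\to 0$ argument. $T_s=(c_1s+c_2m)/D$ is smooth at $s=0$, hence so is $\phi=\exp\int T_s\,ds$. The blow-up of the arctanh argument there is an artifact of the closed form: replacing the argument by its reciprocal changes $\phi$ only by a constant factor.
\end{itemize}
Note that the paper's own proof does not discuss non-regularity at all.
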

\begin{proof}
Consider
\[
\phi(r,s) =
\sqrt{\,r^2 + c_1 s^2 + c_2 s \sqrt{r^2 - s^2}\,}\,
\exp\!\left(
\frac{c_2}{\sqrt{c_2^2 - 4c_1 - 4}}
\operatorname{arctanh}\!\left(
\frac{c_2 s + 2\sqrt{r^2 - s^2}}{s\sqrt{c_2^2 - 4c_1 - 4}}
\right)
\right).
\]

The key idea is to compute explicitly the auxiliary functions \(P\) and \(Q\).
To this end, we first introduce
\[
T_s = \frac{\phi_s}{\phi}
   = \frac{c_1 s + c_2 \sqrt{r^2-s^2}}{r^2+c_1s^2+c_2 s\sqrt{r^2-s^2}}, 
   \qquad
T_r = \frac{\phi_r}{\phi}
   = \frac{r}{r^2+c_1s^2+c_2 s\sqrt{r^2-s^2}} .
\]
By differentiating, we obtain
\[
T_{ss}=\frac{\phi_{ss}}{\phi}
 = \frac{\partial T_s}{\partial s}+T_s^2
 = \frac{r^2\bigl(-c_2 s+c_1 \sqrt{r^2-s^2}\bigr)}{\sqrt{r^2-s^2}\,\bigl(r^2+c_1s^2+c_2 s\sqrt{r^2-s^2}\bigr)^2},
\]
\[
T_{rs}=\frac{\phi_{rs}}{\phi}
 = \frac{\partial T_s}{\partial r}+T_sT_r
 = -\frac{rs\bigl(-c_2 s+c_1 \sqrt{r^2-s^2}\bigr)}{\sqrt{r^2-s^2}\,\bigl(r^2+c_1s^2+c_2 s\sqrt{r^2-s^2}\bigr)^2}.
\]

Now, using the formulas for the geodesic spray coefficients, we get
\[
Q = \frac{1}{2r} \cdot \frac{-T_r + s T_{rs} + r T_{ss}}{1 - s T_s + (r^2 - s^2) T_{ss}}
   = \frac{c_1 - 1}{2r^2(c_1+1)} - \frac{c_1 s^2}{r^4(c_1+1)} - \frac{c_2 s \sqrt{r^2 - s^2}}{r^4(c_1+1)},
\]
\[
P = -\left( s   + (r^2 - s^2) T_s \right)Q
     + \frac{1}{2r } \left( s T_r + r T_s \right)
   = \frac{s}{r^2} + \frac{c_2 \sqrt{r^2 - s^2}}{r^2(c_1+1)}.
\]

Since \(n \geq 3\), Theorem~\ref{Theorem_A} implies that the metric is Landsbergian. 
To check whether it is Berwaldian, we compute
\[
P-sP_s=\frac{c_2}{(c_1+1)\sqrt{r^2-s^2}}, 
\qquad 
Q_s-sQ_{ss}=-\frac{c_2}{(c_1+1)(r^2-s^2)^{3/2}}.
\]
These expressions show that the Berwald conditions \eqref{Berwald_conditions} are not satisfied, 
so the metric is Landsbergian but not Berwaldian. 
\end{proof}

\medskip

\begin{theorem}\label{Thm:Landsberg_2}
Let $n \geq 3$ and consider the spherically symmetric Finsler metric 
\[
F(x,y) = u\,\phi(r,s), 
\]
where
\[
\phi(r,s)= 
\sqrt{\,r^2+c_1 s^2+c_2 s \sqrt{r^2-s^2}\,}\,
\exp\!\left(
\frac{c_2}{\sqrt{\,c_2^2-4c_1-4\,}} 
\operatorname{arctanh}\!\left(
\frac{c_2s+2\sqrt{r^2-s^2}}{s\sqrt{\,c_2^2-4c_1-4\,}}
\right)
\right).
\]
Then $F$ is non-regular  Landsbergian if and only if either  
\begin{enumerate}
\item[(i)] $c_1, c_2$ are constants, or  
\item[(ii)] $c_1 = b c_2^2 - 1$ for some constant $b\in\mathbb{R}$.  
\end{enumerate}

In case (ii), the metric takes the form
\[
\phi(r,s) =
\sqrt{\,r^2+(b c_2^2 -1)s^2 + c_2 s \sqrt{r^2-s^2}\,}\,
\exp\!\left(
\frac{1}{\sqrt{\,1-4b\,}}
\operatorname{arctanh}\!\left(
\frac{c_2 s + 2\sqrt{r^2-s^2}}{c_2 s \sqrt{\,1-4b\,}}
\right)
\right).
\]

Moreover, in this case the spray coefficients are given by the functions
\[
Q = -\frac{(b r c_2 c_2' - 2 b c_2^2 + 2) s^2}{2 b r^4 c_2^2}
     - \frac{s \sqrt{r^2-s^2}}{b r^4 c_2}
     + \frac{b c_2^2 - 2}{2 b r^2 c_2^2},
\quad
P = \frac{s}{r^2} + \frac{\sqrt{r^2-s^2}}{b r^2 c_2}.
\]

Hence $F$ is a Landsberg metric which is not Berwaldian.

If  $1-4b< 0$, the expression involving $\operatorname{arctanh}$ is replaced by $\arctan$ using the identity
\(
I \cdot \operatorname{arctanh}(I z) = \arctan(z).
\)
\end{theorem}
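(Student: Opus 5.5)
We compute the spray functions $P$ and $Q$ for general $r$-dependent $c_1(r),c_2(r)$ and then impose the Landsberg conditions of Theorem~\ref{Theorem_A}. As in the proof of Theorem~\ref{Thm:Landsberg_1}, we work with the logarithmic derivatives $T_s=\phi_s/\phi$, $T_r=\phi_r/\phi$, $T_{ss}=\phi_{ss}/\phi$ and $T_{rs}=\phi_{rs}/\phi$. Since $\phi$ is given as $\exp\int T_s\,ds$ with
\[
T_s=\frac{c_1 s+c_2\sqrt{r^2-s^2}}{r^2+c_1s^2+c_2s\sqrt{r^2-s^2}},
\]
the quantities $T_s$ and $T_{ss}$ are the same as before. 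The new feature is that $T_r$ now picks up the terms $c_1'\,\partial_{c_1}\log\phi$ and $c_2'\,\partial_{c_2}\log\phi$. These involve the $\operatorname{arctanh}$ term, since $\partial_{c_i}$ of it is not algebraic in general. Substituting into \eqref{P,Q} expresses $Q$ and $P$ as rational combinations of $s$, $\sqrt{r^2-s^2}$ and this transcendental piece, with coefficients linear in $c_1'$ and $c_2'$.

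Next we impose the Landsberg condition. By Theorem~\ref{Theorem_A}, for $n\ge 3$ a non-Riemannian metric is Landsberg iff $P$ and $Q$ have the form \eqref{Zhou_P&Q}. Equivalently, we may impose $L_1=L_2=0$ from \eqref{Zhou_L_1_2} directly, or, more conveniently, check $P_{sss}$-type conditions. The cleanest test is that $P-c_1(r)s$ must equal a multiple of $\sqrt{r^2-s^2}$, and $Q$ must lie in the span of $\{1,s^2,s\sqrt{r^2-s^2}\}$. Any $\operatorname{arctanh}$ or logarithmic term surviving in $P$ or $Q$ must therefore vanish identically. Collecting the coefficient of the transcendental term $\Theta:=\operatorname{arctanh}(\cdots)$ in $Q$, its derivative with respect to $c_i$, and the term $\log(r^2+c_1s^2+c_2s\sqrt{r^2-s^2})$ gives a linear condition on $(c_1',c_2')$.

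The main obstacle is analysing this condition. We expect the coefficient of $\Theta$ to be proportional to
\[
\frac{d}{dr}\Bigl(\frac{c_2}{\sqrt{c_2^2-4c_1-4}}\Bigr),
\]
that is, to
\[
c_2'(c_2^2-4c_1-4)-c_2(c_2c_2'-2c_1')=-2\bigl(c_2'(c_1+1)-c_2c_1'\bigr)\cdot(\text{factor}).
\]
Its vanishing means $(c_1+1)/c_2^2$ is constant. We would verify this by differentiating $\log\phi$ in $r$ and isolating the $\Theta$-coefficient. The remaining algebraic part should then either force $c_1'=c_2'=0$, which is case (i), or be consistent with $(c_1+1)/c_2^2=b$, which is case (ii). In case (i) the conclusion is Theorem~\ref{Thm:Landsberg_1}. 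In case (ii), with $c_1+1=bc_2^2$, the exponent coefficient becomes $c_2/(|c_2|\sqrt{1-4b})$, and absorbing $c_2$ into the argument gives the displayed form of $\phi$. For the converse, in case (ii) we recompute $P$ and $Q$ with $c_1=bc_2^2-1$, where $c_1'=2bc_2c_2'$. We expect to obtain the displayed $P=\frac{s}{r^2}+\frac{\sqrt{r^2-s^2}}{br^2c_2}$ and $Q$ containing the term $-\frac{s\sqrt{r^2-s^2}}{br^4c_2}$. These fit \eqref{Zhou_P&Q} with the same function in both roles (the $c_2/r^2$ of $P$ and the $-c_2 s/r^4$ of $Q$ match), so $F$ is Landsberg.

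Finally we show non-Berwaldness and non-regularity. We compute
\[
P-sP_s=\frac{r^2}{br^2c_2\sqrt{r^2-s^2}}\cdot\frac{1}{r^0}\ne0,
\]
so the Berwald conditions \eqref{Berwald_conditions} fail. Non-regularity follows as in Theorem~\ref{Thm:Landsberg_1}, since $\phi$ or $\phi-s\phi_s$ degenerates near $s=0$ or $s=\pm r$, where the $\operatorname{arctanh}$ argument becomes singular. The $\arctan$ replacement for $1-4b<0$ is the stated identity.
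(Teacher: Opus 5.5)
Your plan follows the paper's proof. The only transcendental part of $\phi_r/\phi$ is $\frac{d}{dr}\bigl(c_2/\sqrt{c_2^2-4c_1-4}\bigr)\operatorname{arctanh}(\cdots)$, whose coefficient is exactly the paper's $-2(2c_1c_2'-c_1'c_2+2c_2')/(c_2^2-4c_1-4)^{3/2}$. Forcing it to vanish gives $c_1=bc_2^2-1$, after which $P,Q$ have the form of Theorem~\ref{Theorem_A} and $P-sP_s=1/\bigl(bc_2\sqrt{r^2-s^2}\bigr)\neq 0$.

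There is one algebra slip: that numerator equals $-2\bigl(2c_2'(c_1+1)-c_2c_1'\bigr)$, not $-2\bigl(c_2'(c_1+1)-c_2c_1'\bigr)$. Only the former gives your (correct) conclusion that $(c_1+1)/c_2^2$ is constant, and that step, as in the paper, tacitly requires $c_2\neq 0$.
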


 \begin{proof}
Let 
\[
F(x,y) = u\,\phi(r,s), 
\]
be a spherically symmetric Finsler metric with $n \geq 3$, where 
\[
\phi(r,s)= \sqrt{r^2+c_1 s^2+c_2 s \sqrt{r^2-s^2}} 
   \exp\!\left(\frac{c_2}{\sqrt{c_2^2-4c_1-4}} 
   \operatorname{arctanh}\!\frac{c_2s+2\sqrt{r^2-s^2}}{s\sqrt{c_2^2-4c_1-4}}\right).
\]

If $c_1$ and $c_2$ are constants, then the result follows directly from Theorem~\ref{Thm:Landsberg_1}.  
Now suppose $c_1=c_1(r)$ and $c_2=c_2(r)$. A direct computation yields
\[
\frac{\phi_s}{\phi}=\frac{c_1 s+c_2 \sqrt{r^2-s^2}}{r^2+c_1s^2+c_2 s\sqrt{r^2-s^2}},
\]
and
\begin{align*}
\frac{\phi_r}{\phi}=&\frac{sr^2 c_2'-c_2's^3+c_1' s^2 \sqrt{r^2-s^2}+rc_2s+2r \sqrt{r^2-s^2}}{2\sqrt{r^2-s^2}(r^2+c_1s^2+c_2 s\sqrt{r^2-s^2})}\\
&+\frac{c_2s((c_2c_2'-2c_1')(r^2-s^2)-rc_2^2+4c_1 r+4r+(2c_1c_2'-c_1'c_2+2c_2')s\sqrt{r^2-s^2})}{2\sqrt{r^2-s^2}(r^2+c_1s^2+c_2s\sqrt{r^2-s^2})(c_2^2-4c_1-4)}\\
&-\frac{2(2c_1c_2'-c_1'c_2+2c_2')}{(c_2^2-4c_1-4)^{3/2}}\operatorname{arctanh}\left( \frac{c_2s+2\sqrt{r^2-s^2}}{s\sqrt{c_2^2-4c_1-4}} \right)\\
&=\frac{s  c_2'\sqrt{r^2-s^2}+c_1' s^2  +2r  }{2 (r^2+c_1s^2+c_2 s\sqrt{r^2-s^2})}+\frac{c_2s((c_2c_2'-2c_1')\sqrt{r^2-s^2}+(2c_1c_2'-c_1'c_2+2c_2')s)}{2(r^2+c_1s^2+c_2s\sqrt{r^2-s^2})(c_2^2-4c_1-4)}\\
&-\frac{2(2c_1c_2'-c_1'c_2+2c_2')}{(c_2^2-4c_1-4)^{3/2}}\operatorname{arctanh}\left( \frac{c_2s+2\sqrt{r^2-s^2}}{s\sqrt{c_2^2-4c_1-4}} \right).
\end{align*}

Comparing these expressions with the Landsberg conditions in \cite[Eq.~(5.2)]{Elgendi2021-SSM}, we see that the coefficient of the $\operatorname{arctanh}$ term must vanish. This leads to the ODE
\[
2c_1c_2'-c_1'c_2+2c_2'=0,
\]
whose solution is
\[
c_1 = b c_2^2 -1, \qquad b \in \mathbb{R}.
\]

Under this relation, the coefficients $P$ and $Q$ take the form
\[
P=\frac{s}{r^2}+\frac{\sqrt{r^2-s^2}}{b r^2 c_2}, 
\qquad
Q=-\frac{(-b r c_2c_2' +2b c_2^2 -2)s^2}{2b r^4 c_2^2}
   -\frac{s \sqrt{r^2-s^2}}{b r^4 c_2}
   +\frac{b c_2^2-2}{2b r^2 c_2^2}.
\]

Since $n \geq 3$, Theorem~\ref{Theorem_A} implies that the metric is Landsbergian. Moreover,
\[
P-sP_s=\frac{1}{b c_2 \sqrt{r^2-s^2}}, 
\qquad 
Q_s-sQ_{ss}=-\frac{1}{b c_2 (r^2-s^2)^{3/2}},
\]
which shows that the Berwald conditions \eqref{Berwald_conditions} do not hold.  
Hence the metric is Landsberg but not Berwald, completing the proof.
\end{proof}

 \begin{remark}
To construct further examples of Landsberg metrics that are not Berwaldian, we consider conformal transformations.  
By \cite{Elgendi-LBp}, a Landsberg metric $F$ remains Landsberg under the conformal change 
\[
\widetilde{F}(x,y)=e^{\sigma(x)}F(x,y)
\] 
if and only if the $\sigma T$-condition holds, namely 
\[
\sigma_h T^h_{ijk}=0,
\quad 
\sigma_h:=\frac{\partial \sigma}{\partial x^h},
\]
where $T^h_{ijk}$ is the T-tensor of $F$.  

Since the metric classes treated in Theorems \ref{Thm:Landsberg_1} and \ref{Thm:Landsberg_2} satisfy the $\sigma T$-condition, their conformal changes remain Landsberg but not Berwaldian under the same restrictions.  
In particular, for $n\geq 3$, the following families provide Landsberg non-Berwald metrics:
\[
\phi(r,s) = A(r)\,
\sqrt{\,r^2 + c_1 s^2 + c_2 s \sqrt{r^2 - s^2}\,}\,
\exp\!\left(
\frac{c_2}{\sqrt{c_2^2 - 4c_1 - 4}}
\operatorname{arctanh}\!\left(
\frac{c_2 s + 2\sqrt{r^2 - s^2}}{s\sqrt{c_2^2 - 4c_1 - 4}}
\right)
\right),
\]
and
\[
\phi(r,s) = B(r)\,
\sqrt{r^2+(bc_2^2-1) s^2+c_2 s \sqrt{r^2-s^2}}\,
\exp\!\left(
\frac{1}{\sqrt{1-4b}}
\operatorname{arctanh}\!\left(
\frac{c_2s+2\sqrt{r^2-s^2}}{c_2s\sqrt{1-4b}}
\right)
\right),
\]
subject to the restrictions given in Theorems \ref{Thm:Landsberg_1} and \ref{Thm:Landsberg_2}.
\end{remark}

\section*{Declarations}
\begin{itemize}
\item \textbf{Competing interests}: The author  declares no conflict of interest.
  \item \textbf{Availability of data and material}: Not applicable.
  \item \textbf{Funding}: Not applicable.
  \item \textbf{ Declaration of Generative AI and AI-assisted technologies in the writing process}: During the preparation of this work, the author used Gemini and ChatGPT to refine the language and improve it. After using these tools, the author  reviewed and edited the content as needed and take full responsibility for the content of the published article.
\end{itemize}

\providecommand{\bysame}{\leavevmode\hbox
to3em{\hrulefill}\thinspace}
\providecommand{\MR}{\relax\ifhmode\unskip\space\fi MR }
\providecommand{\MRhref}[2]{%
  \href{http://www.ams.org/mathscinet-getitem?mr=#1}{#2}
} \providecommand{\href}[2]{#2}

\end{document}